\newtheorem{thm}{Theorem}[section]
\newtheorem{crl}[thm]{Corollary}
\newtheorem{prp}[thm]{Proposition}
\newtheorem{lmm}[thm]{Lemma}
\newtheorem{rmk}[thm]{Remark}
\newtheorem{dfn}[thm]{Definition}
\newtheorem{asm}[thm]{Assumption}
\newcommand{\LR}{\Leftrightarrow}
\newcommand{\ra}{\rightarrow}
\newcommand{\mt}{\mapsto}
\newcommand{\HKC}{\mathrm{Hom}(K,\mathbb C)}
\newcommand{\oq}{\overline{\mathbb Q}}
\newcommand{\bq}{B_{\mathrm cris}\overline{\mathbb Q_p}}
\newcommand{\wg}{W_p}
\newcommand{\op}{\mathrm{ord}_p\hspace{1pt}}
\newcommand{\tst}{\text{ s.t.\ }}
\newcommand{\subjclass}[2][2010]{%
  \let\@oldtitle\@title%
  \gdef\@title{\@oldtitle\footnotetext{#1 \emph{Mathematics subject classification(s).} #2}}%
}
\newcommand{\keywords}[1]{%
  \let\@@oldtitle\@title%
  \gdef\@title{\@@oldtitle\footnotetext{\emph{Key words and phrases.} #1.}}%
}
\title{Note on Coleman's formula for the absolute Frobenius on Fermat curves}
\author{Tomokazu Kashio\thanks{Tokyo University of Science, \texttt{kashio\_tomokazu@ma.noda.tus.ac.jp}}}
\subjclass{11M35, 11S80, 14F30, 14H45, 14K20, 14K22, 33B15}
\keywords{the absolute Frobenius, Fermat curves, the Gross-Koblitz formula, $p$-adic gamma function, CM-periods, $p$-adic periods}
\begin{document}


\maketitle

\begin{abstract}
Coleman calculated the absolute Frobenius on Fermat curves explicitly.
In this paper we show that a kind of $p$-adic continuity implies a large part of his formula.
To do this, we study a relation between functional equations of the ($p$-adic) gamma function and monomial relations on ($p$-adic) CM-periods.
\end{abstract}

\section{Introduction}

We modify Euler's gamma function $\Gamma(z)$ into 
\begin{align*}
\Gamma_\infty(z):=\frac{\Gamma(z)}{\sqrt{2\pi}}=\exp(\zeta'(0,z)) \quad (z>0)
\end{align*}
and focus on its special values at rational numbers.
Here we put $\zeta(s,z):=\sum_{k=0}^\infty (z+k)^{-s}$ to be the Hurwitz zeta function.
The last equation is due to Lerch. One has a``simple proof'' in \cite[p17]{Yo}.
The gamma function enjoys some functional equations:
\begin{align}
&\text{Euler's Reflection formula:}&&\Gamma_\infty(z)\Gamma_\infty(1-z)=\frac{1}{2\sin \pi z}, \label{erf} \\
&\text{Gauss' Multiplication formula:}&&\prod_{k=0}^{d-1}\Gamma_\infty(z+\tfrac{k}{d})=d^{\frac{1}{2}-dz}\Gamma_\infty(dz) \quad (d \in \mathbb N). \label{gmf}
\end{align}
For proofs, see \cite[\S 3, 4]{Ar}.
The main topic of this paper is a relation between such functional equations and monomial relations of CM-periods, and its $p$-adic analogue.
We introduce some notations.

\begin{dfn}
Let $K$ be a CM-field.
We denote by $I_K$ the $\mathbb Q$-vector space formally generated by all complex embeddings of $K$:
\begin{align*}
I_K:=\bigoplus_{\sigma \in \HKC} \mathbb Q \cdot \sigma.
\end{align*}
We identify a subset $S\subset \HKC$ as an element $\sum_{\sigma \in S}\sigma\in I_K$.
Shimura's period symbol is the bilinear map
\begin{align*}
p_K\colon I_K \times I_K \ra \mathbb C^\times/\oq^\times
\end{align*}
characterized by the following properties $(\mathrm{P}_1)$, $(\mathrm{P}_2)$.
\begin{enumerate}
\item[$(\mathrm{P}_1)$] Let $A$ be an abelian variety defined over $\oq$, having CM of type $(K,\Xi)$.
Namely, for each $\sigma \in \HKC$, there exists a non-zero ``$K$-eigen'' differential form $\omega_\sigma$ of the second kind satisfying 
\begin{align*}
k^*(\omega_\sigma)=\sigma(k)\omega_\sigma \quad (k\in K), 
\end{align*}
where $k^*$ denotes the action of $k \in K$ via $K\cong \mathrm{End}(A)\otimes_\mathbb Z \mathbb Q$ on the de Rham cohomology $H^1_{\mathrm dR}(A,\mathbb C)$.
Then we have
\begin{align*}
&\Xi=\{\sigma \in \HKC \mid \omega_\sigma \text{ is holomorphic}\}, \\
&p_K(\sigma,\Xi)\equiv
\begin{cases}
\pi^{-1}\int_\gamma \omega_\sigma  & (\sigma \in \Xi) \\
\int_\gamma \omega_\sigma & (\sigma \in \HKC-\Xi)
\end{cases}
\mod \oq^\times
\end{align*}
for an arbitrary closed path $\gamma \subset A(\mathbb C)$ satisfying $\int_\gamma \omega_\sigma  \neq 0$.
\item[$(\mathrm{P}_2)$] Let $\rho$ be the complex conjugation. Then we have
\begin{align*}
p_K(\sigma,\tau)p_K(\rho \circ \sigma, \tau)\equiv p_K(\sigma,\tau)p_K(\sigma,\rho \circ \tau)\equiv 1 \mod \oq^\times \quad (\sigma,\tau \in \HKC).
\end{align*}
\end{enumerate}
\end{dfn}

Strictly speaking, Shimura's $p_K$ in \cite[\S 32]{Sh} is a bilinear map on $\bigoplus_{\sigma \in \HKC} \mathbb Z \cdot \sigma$.
The period symbol also enjoys the following relations:
\begin{enumerate}
\item[$(\mathrm{P}_3)$] Let $\iota\colon K'\cong K$ be an isomorphism of CM-fields. Then we have
\begin{align*}
p_K(\sigma,\tau) \equiv p_{K'}( \sigma \circ \iota,\tau \circ \iota) \mod \oq^\times  \quad (\sigma,\tau \in \HKC).
\end{align*}
\item[$(\mathrm{P}_4)$] Let $K \subset L$ be a field extension of CM-fields. 
We define two linear maps defined as 
\begin{align*}
&\mathrm{Res}\colon I_L \ra I_K,\ \tilde \sigma \mt \tilde \sigma|_K \quad (\tilde \sigma \in \mathrm{Hom}(L,\mathbb C)), \\
&\mathrm{Inf}\colon I_K \ra I_L,\ \sigma \mt \sum_{\substack{\tilde \sigma \in \mathrm{Hom}(L,\mathbb C) \\ \tilde\sigma|_K=\sigma}}
\tilde \sigma \quad (\sigma \in \HKC).
\end{align*}
Then we have
\begin{align*}
p_K(\mathrm{Res}(X),Y)\equiv 
p_L(X,\mathrm{Inf}(Y)) \mod \oq^\times \quad (X \in I_L,\ Y \in I_K).
\end{align*}
\end{enumerate}

The following results by Gross-Rohrlich and the above relations $(\mathrm{P}_3)$, $(\mathrm{P}_4)$ provide 
an explicit formula \cite[Theorem 2.5, Chap.~III]{Yo} on $p_K$ for $K=\mathbb Q(\zeta_N)$ ($\zeta_N=e^{\frac{2\pi i}{N}}$, $N\geq 3$).
We can rewrite it in the form (\ref{ef}) by the arguments in \cite[\S 6]{Ka2}.
Let $\sigma_ b\in \mathrm{Gal}(\mathbb Q(\zeta_N)/\mathbb Q)$ ($(b,N)=1$) be defined by $\sigma_b(\zeta_N):=\zeta_N^b$,
$\langle \alpha \rangle \in (0,1)$ denote the fraction part of $\alpha \in \mathbb Q-\mathbb Z$.

\begin{thm}[{\cite[Theorem in Appendix]{Gr}}]
Let $F_N:x^N+y^N=1$ be the $N$th Fermat curve, $\eta_{r,s}:=x^{r-1}y^{s-N}dx$ its differential forms of the second kind $(0<r,s<N$, $r+s\neq N)$.
Then we have for any closed path $\gamma$ on $F_N(\mathbb C)$ with $\int_\gamma \eta_{r,s}\neq 0$
\begin{align} \label{gamma}
\int_\gamma \eta_{r,s}\equiv \frac{\Gamma(\frac{r}{N})\Gamma(\frac{s}{N})}{\Gamma(\frac{r+s}{N})} \mod \mathbb Q(\zeta_N)^\times.
\end{align}
\end{thm}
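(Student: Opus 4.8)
The plan is to use the large automorphism group of $F_N$ to reduce the assertion to a single, explicitly computable period, and then to identify that period with an Euler beta integral. Write $\zeta:=\zeta_N$ and let $G:=\mu_N\times\mu_N$ act on $F_N$ by $g_{i,j}\colon(x,y)\mapsto(\zeta^{i}x,\zeta^{j}y)$; this induces a $\mathbb Q$-linear action $g\mapsto g_*$ on $H_1(F_N(\mathbb C),\mathbb Q)$, hence a $\mathbb Q(\zeta)$-linear one after extending scalars. A direct substitution gives $g_{i,j}^{*}\eta_{r,s}=\zeta^{ir+js}\,\eta_{r,s}$, so $\eta_{r,s}$ is an eigenform for the character $\chi$ of $G$ defined by $\chi(g_{i,j})=\zeta^{ir+js}$. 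Since $\eta_{r,s}$ is closed, integration against it is a well-defined linear functional on $H_1(F_N(\mathbb C),\mathbb C)$, and $\int_{g_*\gamma}\eta_{r,s}=\int_\gamma g^{*}\eta_{r,s}=\chi(g)\int_\gamma\eta_{r,s}$. Hence this functional kills every $G$-isotypic component of $H_1$ except the one on which $g_*$ acts as multiplication by $\chi(g)$; equivalently, $\int_\gamma\eta_{r,s}$ depends only on $e_\chi\gamma$, where $e_\chi=\tfrac1{|G|}\sum_{g\in G}\chi(g)^{-1}g\in\mathbb Q(\zeta)[G]$.

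The structural input I would invoke next is that $e_\chi H_1(F_N(\mathbb C),\mathbb Q(\zeta))$ is one-dimensional over $\mathbb Q(\zeta)$. After extending scalars to $\mathbb C$ it becomes the $\chi$-isotypic component of $H_1(F_N(\mathbb C),\mathbb C)$, which by the classical description of the homology of Fermat curves is one-dimensional precisely because $r\not\equiv0$, $s\not\equiv0$ and $r+s\not\equiv0\pmod N$ — exactly the standing hypotheses $0<r,s<N$, $r+s\neq N$; and $e_\chi$ has coefficients in $\mathbb Q(\zeta)$ because $\chi$ is $\mathbb Q(\zeta)$-valued. Granting this, it suffices to exhibit a single closed path (or $1$-cycle) $\gamma_0$ with $\int_{\gamma_0}\eta_{r,s}\neq0$: for any other closed path $\gamma$ with $\int_\gamma\eta_{r,s}\neq0$, the classes $e_\chi\gamma$ and $e_\chi\gamma_0$ lie in the same one-dimensional $\mathbb Q(\zeta)$-space, so $e_\chi\gamma=\lambda\,e_\chi\gamma_0$ with $\lambda\in\mathbb Q(\zeta)^\times$, whence $\int_\gamma\eta_{r,s}=\lambda\int_{\gamma_0}\eta_{r,s}$.

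To produce $\gamma_0$, let $A$ be the arc $\{(x,y)\in F_N(\mathbb C)\mid 0\le x\le1,\ y=(1-x^{N})^{1/N}\in\mathbb R_{>0}\}$ oriented from $(0,1)$ to $(1,0)$, and put $\sigma:=g_{1,0}$, $\tau:=g_{0,1}$. Because $\sigma$ fixes every point with $x=0$ and $\tau$ fixes every point with $y=0$, the boundary contributions cancel and $\gamma_0:=(1-\sigma_*)(1-\tau_*)A$ is a closed $1$-cycle on $F_N$. Using $\sigma^{*}\eta_{r,s}=\zeta^{r}\eta_{r,s}$, $\tau^{*}\eta_{r,s}=\zeta^{s}\eta_{r,s}$ and then the substitution $u=x^{N}$ (which identifies the arc integral of $\eta_{r,s}$ with $\tfrac1N$ of an Euler beta integral),
\begin{align*}
\int_{\gamma_0}\eta_{r,s}=(1-\zeta^{r})(1-\zeta^{s})\int_{0}^{1}x^{r-1}(1-x^{N})^{\frac sN-1}\,dx=\frac{(1-\zeta^{r})(1-\zeta^{s})}{N}\cdot\frac{\Gamma(\tfrac rN)\Gamma(\tfrac sN)}{\Gamma(\tfrac{r+s}N)}.
\end{align*}
Since $0<r,s<N$, the prefactor $\tfrac1N(1-\zeta^{r})(1-\zeta^{s})$ is a nonzero element of $\mathbb Q(\zeta_N)^{\times}$; in particular $\int_{\gamma_0}\eta_{r,s}\neq0$, so $\gamma_0$ is admissible in the previous paragraph, and we conclude $\int_\gamma\eta_{r,s}\equiv\Gamma(\tfrac rN)\Gamma(\tfrac sN)/\Gamma(\tfrac{r+s}N)$ modulo $\mathbb Q(\zeta_N)^{\times}$.

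I expect the genuine work to lie in the last step rather than in the reduction. One must set up the arc $A$ and the chain $(1-\sigma_*)(1-\tau_*)A$ carefully enough to see that all boundary terms cancel — so that $\gamma_0$ really is a cycle, and not merely a relative cycle on some intermediate cover — and one must justify the convergence and the beta-integral evaluation of the improper integral $\int_0^1 x^{r-1}(1-x^{N})^{s/N-1}\,dx$, as well as the non-vanishing of $(1-\zeta^{r})(1-\zeta^{s})$. The two ingredients used in the reduction — that the $\chi$-eigenspace of $H_1(F_N)$ is one-dimensional and defined over $\mathbb Q(\zeta_N)$ — are standard for Fermat curves, the latter resting only on $\chi$ being $\mathbb Q(\zeta_N)$-valued.
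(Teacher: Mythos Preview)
The paper does not give its own proof of this theorem; it is stated with a citation to Rohrlich's appendix in \cite{Gr} and then used as input. Your argument is correct and is essentially Rohrlich's original proof: the reduction via the one-dimensionality of the $\chi$-isotypic piece of $H_1(F_N,\mathbb Q(\zeta_N))$, the explicit cycle $(1-\sigma_*)(1-\tau_*)A$, and the beta-integral evaluation $\int_0^1 x^{r-1}(1-x^N)^{s/N-1}\,dx=\tfrac1N\,B(\tfrac rN,\tfrac sN)$ are exactly the ingredients that appear there.
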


\begin{thm}[{\cite[\S 2]{Gr}, \cite[\S 2, Chap.~III]{Yo}}] \label{ptoint}
The CM-type corresponding to $\eta_{r,s}$ is 
\begin{align} \label{xi}
\Xi_{r,s}:=\{\sigma_b \mid 1\leq b \leq N,\ (b,N)=1,\ \langle \tfrac{br}{N}\rangle+\langle \tfrac{bs}{N}\rangle+\langle \tfrac{b(N-r-s)}{N}\rangle=1\}.
\end{align}
That is, we have 
\begin{align*}
p_{\mathbb Q(\zeta_N)}(\mathrm{id},\Xi_{r,s}) \equiv 
\begin{cases}
\pi^{-1} \int_{\gamma} \eta_{r,s}  & (r+s<N) \\
\int_{\gamma} \eta_{r,s} &(r+s>N)
\end{cases}
\mod \oq^\times.
\end{align*}
\end{thm}

\begin{crl}[{\cite[Theorem 3]{Ka2}}]
We have for any $\frac{a}{N} \in \mathbb Q -\mathbb Z$
\begin{align} \label{ef}
\Gamma_\infty(\tfrac{a}{N}) \equiv 
\pi^{\frac{1}{2}-\langle\frac{a}{N}\rangle} p_{\mathbb Q(\zeta_N)}\left(\mathrm{id},\sum_{(b,N)=1} \left(\tfrac{1}{2}-\langle \tfrac{ab}{N}\rangle\right) \cdot \sigma_b\right) 
\mod \oq^\times.
\end{align}
Here the sum runs over all $b$ satisfying $1\leq b \leq N$, $(b,N)=1$.
\end{crl}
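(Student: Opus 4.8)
The plan is to recover the individual values $\Gamma(\tfrac aN)$ from the ``Beta-type'' quotients $\Gamma(\tfrac rN)\Gamma(\tfrac sN)/\Gamma(\tfrac{r+s}N)$, for which the two theorems above already furnish a period-symbol expression, and then to fix the remaining power of $\pi$ by a rigidity argument. Concatenating $(\ref{gamma})$ with the identity of the second theorem gives, for $0<r,s<N$ with $r+s\neq N$,
\begin{align*}
\frac{\Gamma(\tfrac rN)\Gamma(\tfrac sN)}{\Gamma(\tfrac{r+s}N)}\equiv \pi^{\epsilon_{r,s}}\,p_{\mathbb Q(\zeta_N)}(\mathrm{id},\Xi_{r,s})\pmod{\oq^\times},\qquad \epsilon_{r,s}:=1-\langle\tfrac rN\rangle-\langle\tfrac sN\rangle+\langle\tfrac{r+s}N\rangle,
\end{align*}
where $\epsilon_{r,s}\in\{0,1\}$ equals $1$ precisely when $r+s<N$, that is, precisely when $\mathrm{id}=\sigma_1\in\Xi_{r,s}$.

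Next I would express $\Xi_{r,s}$ explicitly inside $I_{\mathbb Q(\zeta_N)}$. Since $\langle\tfrac{br}N\rangle,\langle\tfrac{bs}N\rangle,\langle\tfrac{b(N-r-s)}N\rangle$ lie in $(0,1)$ and have integer sum, that sum is $1$ or $2$, so the indicator of ``$\,=1\,$'' equals $2$ minus the sum; rewriting $\langle\tfrac{b(N-r-s)}N\rangle=1-\langle\tfrac{b(r+s)}N\rangle$ (using $\langle -x\rangle=1-\langle x\rangle$) one obtains
\begin{align*}
\Xi_{r,s}=\delta(r)+\delta(s)-\delta(r+s)+\tfrac12\mathbf 1,\qquad \delta(a):=\sum_{(b,N)=1}\bigl(\tfrac12-\langle\tfrac{ab}N\rangle\bigr)\sigma_b,\quad \mathbf 1:=\sum_{(b,N)=1}\sigma_b.
\end{align*}
Applying $(\mathrm{P}_2)$ to each conjugate pair $\{\tau,\rho\circ\tau\}$ gives $p_{\mathbb Q(\zeta_N)}(\mathrm{id},\mathbf 1)\equiv1$, hence $p_{\mathbb Q(\zeta_N)}(\mathrm{id},\tfrac12\mathbf 1)\equiv1$ since $\mathbb C^\times/\oq^\times$ is torsion-free (so uniquely divisible). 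Therefore $p_{\mathbb Q(\zeta_N)}(\mathrm{id},\Xi_{r,s})\equiv p_{\mathbb Q(\zeta_N)}(\mathrm{id},\delta(r))\,p_{\mathbb Q(\zeta_N)}(\mathrm{id},\delta(s))\,p_{\mathbb Q(\zeta_N)}(\mathrm{id},\delta(r+s))^{-1}$.

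Put $f(a):=\Gamma(\tfrac aN)\,p_{\mathbb Q(\zeta_N)}(\mathrm{id},\delta(a))^{-1}\in\mathbb C^\times/\oq^\times$; the last two displays say that $f(r)f(s)\equiv\pi^{\epsilon_{r,s}}f(r+s)$ whenever $0<r,s<N$ and $r+s\neq N$. Because $\Gamma(z+1)=z\Gamma(z)$ while $\langle\cdot\rangle$ and $\delta(\cdot)$ depend only on the residue $a\bmod N$, the symbol $f$ is $N$-periodic; and the reflection formula $\Gamma(z)\Gamma(1-z)=\pi/\sin\pi z$ together with $\delta(a)+\delta(N-a)=0$ gives $f(a)f(N-a)\equiv\pi$. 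Inducting on $a$ through the case $s=1$ (the excluded value $r+1=N$ never being needed) yields $f(a)\equiv f(1)^a\pi^{1-a}$ for $1\le a\le N-1$, whence $f(1)^N\equiv\pi^{N-1}$; torsion-freeness of $\mathbb C^\times/\oq^\times$ then forces $f(1)\equiv\pi^{1-1/N}$, and so $f(a)\equiv\pi^{1-\langle a/N\rangle}$ for every $a$ with $N\nmid a$. This is the identity $\Gamma(\tfrac aN)\equiv\pi^{1-\langle a/N\rangle}\,p_{\mathbb Q(\zeta_N)}(\mathrm{id},\delta(a))$; dividing both sides by $\sqrt{2\pi}$ (and $\sqrt2\in\oq^\times$) rewrites it as $(\ref{ef})$ with $\Gamma$ replaced by $\Gamma_\infty=\Gamma/\sqrt{2\pi}$.

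I expect the delicate part to be the exact accounting of the powers of $\pi$: the case split $\pi^{-1}$ versus $1$ in the second theorem, the correction term $\tfrac12\mathbf 1$ in the expression for $\Xi_{r,s}$, and the reflection contribution all enter the final exponent, and it is precisely the torsion-freeness of $\mathbb C^\times/\oq^\times$ that removes the last ambiguity from the functional equation for $f$. The other two points needing care are the combinatorial identity $\Xi_{r,s}=\delta(r)+\delta(s)-\delta(r+s)+\tfrac12\mathbf 1$ in $I_{\mathbb Q(\zeta_N)}$ and the vanishing $p_{\mathbb Q(\zeta_N)}(\mathrm{id},\mathbf 1)\equiv1$.
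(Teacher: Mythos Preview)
Your argument is correct, and you have in fact caught what appears to be a typo in the displayed statement: as written with $\Gamma$ on the left, the exponent of $\pi$ should be $1-\langle a/N\rangle$, not $\tfrac12-\langle a/N\rangle$; equivalently, the formula is correct with $\Gamma_\infty$ in place of $\Gamma$, which is exactly what you arrive at and how the paper itself uses (\ref{ef}) later (e.g.\ in the definition of $P(\tfrac aN)$, where it is $\Gamma_\infty(\tfrac aN)/((2\pi i)^{\frac12-\langle a/N\rangle}p_K(\mathrm{id},\delta(a)))$ that lies in $\overline{\mathbb Q}$). A quick sanity check confirms this: for $a/N=1/2$ one has $\delta(a)=0$, so the right side is $\pi^{0}=1$, and $\Gamma(1/2)=\sqrt\pi\not\equiv1$ while $\Gamma_\infty(1/2)=1/\sqrt2\equiv1$.

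As for the route: the paper does not prove the corollary in the text but points to Yoshida's explicit formula for $p_{\mathbb Q(\zeta_N)}$ together with the rewriting arguments of \cite[\S6]{Ka2}, and separately remarks that the case $(a,N)>1$ is handled via $(\mathrm P_4)$. Your approach is more direct and self-contained: from the two input theorems you extract the single functional equation $f(r)f(s)\equiv\pi^{\epsilon_{r,s}}f(r+s)$ for $f(a)=\Gamma(a/N)/p_K(\mathrm{id},\delta(a))$, solve it by the $s=1$ recursion, and pin down the remaining constant using reflection together with the torsion-freeness of $\mathbb C^\times/\overline{\mathbb Q}^\times$. This treats all residues $a\bmod N$ uniformly without a separate appeal to $(\mathrm P_3)$ or $(\mathrm P_4)$, and it makes the bookkeeping of the $\pi$-exponent completely transparent. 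The trade-off is that the paper's outlined route, going through Yoshida's formula, yields the full matrix $p_{\mathbb Q(\zeta_N)}(\sigma,\tau)$ rather than just the slice $\sigma=\mathrm{id}$.
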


Note that (\ref{ef}) holds true even if $(a,N)>1$, essentially due to $(\mathrm{P}_4)$.
Although the following is just a toy problem, we provide its proof by using the period symbol, in order to explain the theme of this paper:
we may say that some functional equations of the gamma function ``correspond'' to some monomial relations of CM-periods.

\begin{prp}[A toy problem] \label{toy}
The explicit formula {\rm (\ref{ef})} implies the following ``functional equations $\bmod \oq^\times$'' on $\Gamma(\frac{a}{N})$:
\begin{align*}
&\text{\rm ``Reflection formula'':}&&\Gamma_\infty(\tfrac{a}{N})\Gamma_\infty(\tfrac{N-a}{N}) \equiv 1 \mod \oq^\times, \\
&\text{\rm ``Multiplication formula'':}&&\prod_{k=0}^{d-1}\Gamma_\infty(\tfrac{a}{N}+\tfrac{k}{d}) \equiv \Gamma_\infty(\tfrac{da}{N}) \mod \oq^\times.
\end{align*}
\end{prp}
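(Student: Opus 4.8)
The plan is to work entirely on the right-hand side of the explicit formula (\ref{ef}), translating each functional equation into an identity among the linear-algebraic arguments of $p_{\mathbb Q(\zeta_N)}(\mathrm{id},-)$ and then invoking bilinearity together with the relations $(\mathrm{P}_2)$. Write $v_a := \sum_{(b,N)=1}(\tfrac12 - \langle\tfrac{ab}{N}\rangle)\cdot\sigma_b \in I_{\mathbb Q(\zeta_N)}$, so that by (\ref{ef}) we have $\Gamma_\infty(\tfrac{a}{N}) \equiv \pi^{-\langle a/N\rangle}\, p_{\mathbb Q(\zeta_N)}(\mathrm{id},v_a) \bmod \oq^\times$ after absorbing $\sqrt{2\pi}$ into $\oq^\times$ and noting $\pi^{1/2}/\sqrt{2\pi} \in \oq^\times$. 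The whole proof then reduces to: (a) an elementary identity in the coefficient field $\mathbb Q$ relating $v_a + v_{N-a}$, respectively $\sum_{k=0}^{d-1} v_{\text{(shifted)}}$, to a combination of $\sigma_b + \rho\circ\sigma_b$ type vectors that $(\mathrm{P}_2)$ kills; and (b) bookkeeping of the explicit powers of $\pi$.

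For the reflection formula: since $\langle\tfrac{(N-a)b}{N}\rangle = \langle -\tfrac{ab}{N}\rangle = 1 - \langle\tfrac{ab}{N}\rangle$ (using $(ab,N)$ is not divisible by $N$, which holds as $a/N\notin\mathbb Z$ and $(b,N)=1$... one must be slightly careful when $N\mid ab$, but then $(b,N)=1$ forces $N\mid a$, excluded), we get $(\tfrac12-\langle\tfrac{ab}{N}\rangle) + (\tfrac12-\langle\tfrac{(N-a)b}{N}\rangle) = 0$ for every $b$, hence $v_a + v_{N-a} = 0$ in $I_{\mathbb Q(\zeta_N)}$. Therefore $p_{\mathbb Q(\zeta_N)}(\mathrm{id},v_a)\,p_{\mathbb Q(\zeta_N)}(\mathrm{id},v_{N-a}) \equiv p_{\mathbb Q(\zeta_N)}(\mathrm{id},0) \equiv 1$, and $\pi^{-\langle a/N\rangle - \langle (N-a)/N\rangle} = \pi^{-1}$, which matches the claimed $\Gamma_\infty(\tfrac{a}{N})\Gamma_\infty(\tfrac{N-a}{N}) \equiv \tfrac{1}{2\sin\pi(a/N)} \equiv 1 \bmod \oq^\times$ since $2\sin\pi(a/N)$ and the stray $\pi^{-1}\cdot(2\pi)$ factors are algebraic. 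Actually here it is even cleaner to note we don't strictly need $(\mathrm{P}_2)$ — the coefficient vector itself vanishes — but the point is that $(\mathrm{P}_2)$ is the structural reason the sum telescopes.

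For the multiplication formula: I would compute $\sum_{k=0}^{d-1} v_{a/N + k/d}$ — more precisely, writing $\tfrac{a}{N}+\tfrac{k}{d} = \tfrac{ad + kN}{Nd}$, the formula (\ref{ef}) is being applied over $\mathbb Q(\zeta_{Nd})$, and one uses the distribution/Hurwitz-type identity $\sum_{k=0}^{d-1}\langle\tfrac{(ad+kN)b}{Nd}\rangle = \langle\tfrac{ab}{N}\rangle + (\text{something independent of }a) $, or rather the classical $\sum_{k=0}^{d-1}\langle x + \tfrac{k}{d}\rangle = \langle dx\rangle + \tfrac{d-1}{2}$. Summing the coefficient $\tfrac12 - \langle\cdot\rangle$ over $k$ gives $\tfrac{d}{2} - \langle\tfrac{da}{N}\rangle - \tfrac{d-1}{2} = \tfrac12 - \langle\tfrac{da}{N}\rangle$, so the coefficient vectors combine exactly into $v_{da}$ up to the subtlety that on the left the index field is $\mathbb Q(\zeta_{Nd})$ while on the right one needs $\mathbb Q(\zeta_N)$; here $(\mathrm{P}_4)$ with $\mathrm{Res}/\mathrm{Inf}$ for $\mathbb Q(\zeta_N)\subset\mathbb Q(\zeta_{Nd})$ does the descent, and the remark after (\ref{ef}) that it holds even when the "numerator" is not coprime to the modulus is what makes $\langle\tfrac{da}{N}\rangle$ with possibly $(da,N)>1$ legitimate. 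The powers of $\pi$ match since $\sum_{k=0}^{d-1}\langle\tfrac{a}{N}+\tfrac{k}{d}\rangle = \langle\tfrac{da}{N}\rangle + \tfrac{d-1}{2}$ and the algebraic factor $d^{1/2 - dz}$ contributes only to $\oq^\times$.

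The main obstacle I expect is the second-field bookkeeping in the multiplication case: one must set up (\ref{ef}) at level $Nd$, track how $\mathrm{Res}$ and $\mathrm{Inf}$ interact with the Galois elements $\sigma_b$ of $\mathbb Q(\zeta_{Nd})$ versus those of $\mathbb Q(\zeta_N)$, and confirm that the distribution identity for fractional parts reproduces precisely the pushed-forward coefficient vector — this is where a sign or an off-by-$\tfrac{d-1}{2}$ error would creep in. Everything else (the reflection case, the $\pi$-exponent accounting, discarding algebraic constants like $2\sin\pi z$, $\sqrt{2\pi}$, $d^{1/2-dz}$) is routine. I would present the reflection formula first as a warm-up showing the mechanism, then do the multiplication formula with the $(\mathrm{P}_4)$ descent spelled out.
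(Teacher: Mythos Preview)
Your approach is essentially the paper's: plug in (\ref{ef}) and reduce everything to the distribution relation $\sum_{k=0}^{d-1}B_1(x+\tfrac{k}{d})=B_1(dx)$ for $B_1(x)=x-\tfrac12$, which simultaneously matches the period-symbol coefficients and the $\pi$-exponents. The reflection case is handled identically (your observation $v_a+v_{N-a}=0$ is exactly the content of $(\mathrm P_2)$ applied to the odd element $v_a$).

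The one point where you diverge is the multiplication formula bookkeeping. You pass to $\mathbb Q(\zeta_{Nd})$, plan to invoke $(\mathrm P_4)$ with $\mathrm{Res}/\mathrm{Inf}$ to descend back to $\mathbb Q(\zeta_N)$, and flag this descent as ``the main obstacle''. The paper sidesteps this entirely: since (\ref{ef}) is valid for \emph{any} denominator, one may replace $N$ by a multiple and assume $d\mid N$ from the outset. Then every argument $\tfrac{a}{N}+\tfrac{k}{d}$ already lies in $\tfrac{1}{N}\mathbb Z$, all terms live in $I_{\mathbb Q(\zeta_N)}$, and the identity $\sum_{k}v_{a+kN/d}=v_{da}$ in $I_{\mathbb Q(\zeta_N)}$ follows directly from the distribution relation (using that $(b,N)=1$ forces $(b,d)=1$, so $kb$ runs over a full residue system mod $d$). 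No change of field, no $(\mathrm P_4)$, no off-by-$\tfrac{d-1}{2}$ hazard. Your route would also work---indeed, if you apply (\ref{ef}) at level $Nd$ to \emph{both} sides you never actually need the descent---but the paper's reduction is cleaner and removes precisely the obstacle you were worried about.
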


\begin{proof}
``Reflection formula'' follows from $(\mathrm{P}_2)$ immediately.
Concerning ``Multiplication formula'', we may assume that $d\mid N$.
Under the expression {\rm (\ref{ef})}, ``Multiplication formula'' is equivalent to
\begin{multline*}
\pi^{\overset{d-1}{\underset{k=0}{\sum}} \frac{1}{2}-\langle \frac{a}{N}+\frac{k}{d}\rangle}
p_{\mathbb Q(\zeta_N)}\left(\mathrm{id}, \sum_{(b,N)=1}\left(\sum_{k=0}^{d-1} \tfrac{1}{2}-\langle \tfrac{ab}{N}+\tfrac{kb}{d}\rangle\right) \cdot \sigma_b\right) \\
\equiv \pi^{\frac{1}{2}-\langle \frac{ad}{N}\rangle}
p_{\mathbb Q(\zeta_N)}\left(\mathrm{id},\sum_{(b,N)=1} (\tfrac{1}{2}-\langle \tfrac{dab}{N}\rangle) \cdot \sigma_b \right).
\end{multline*}
This follows from the multiplication formula 
\begin{align} \label{mffb1}
\sum_{k=0}^{d-1}B_1(x+\tfrac{k}{d})=B_1(dx)
\end{align}
for the $1$st Bernoulli polynomial $B_1(x)=x-\frac{1}{2}$.
\end{proof}

The aim of this paper is to study a $p$-adic analogue of such ``correspondence''. 
More precisely, we shall characterize the $p$-adic gamma function by its functional equations and some special values.
Then we show that the period symbol and its $p$-adic analogue satisfy the corresponding properties to such functional equations.
As an application, we provide an alternative proof of a large part of Coleman's formula (Theorem \ref{Cf}-(i)):
originally, Coleman's formula was proved by calculating the absolute Frobenius on {\em all Fermat curves}.
We shall see that it suffices to calculate it on {\em only one curve} (Remark \ref{onecurve}).

\begin{rmk}
Yoshida and the author formulated conjectures in {\rm \cite{KY1,KY2,Ka2}} which are generalizations of Coleman's formula, from cyclotomic fields to arbitrary CM-fields:
Coleman's formula implies ``the reciprocity law on cyclotomic units'' {\rm \cite{Ka1}} 
and ``the Gross-Koblitz formula on Gauss sums'' {\rm \cite{GK,Co1}} simultaneously.
The author conjectured a generalization {\rm \cite[Conjecture 4]{Ka2}} of Coleman's formula which implies a part of Stark's conjecture and a generalization of 
(the rank $1$ abelian) Gross-Stark conjecture simultaneously. 
The results in this paper (in particular {\rm Remark \ref{onecurve}}) are very important toward this generalization,
since we know only a finite number of algebraic curves (e.g., {\rm \cite{BS}}) whose Jacobian varieties have CM by CM-fields which are not abelian over $\mathbb Q$.
\end{rmk}

The outline of this paper is as follows.
First we introduce Coleman's formula \cite{Co2} for  the absolute Frobenius on Fermat curves in \S \ref{secCf}.
The author rewrote it in the form of Theorem \ref{Cf}: roughly speaking, we write Morita's $p$-adic gamma function $\Gamma_p$ 
in terms of Shimura's period symbol $p_K$, its $p$-adic analogue $p_{K,p}$, and modified Euler's gamma function $\Gamma_\infty$.
In \S \ref{secmr}, we show that some functional equations almost characterize $\Gamma_p$ (Corollary \ref{crloffe}), 
and the corresponding properties ((\ref{mf4G}), Theorem \ref{mthm}) hold for $p_K,p_{K,p},\Gamma_\infty$.
Then we see that a large part (Corollary \ref{maincrl}) of Coleman's formula follows automatically, without explicit computation, under assuming certain $p$-adic continuity properties.
Unfortunately, our results have a root of unity ambiguity although the original formula is a complete equation, since some definitions are well-defined only up to roots of unity.
In \S \ref{secpc}, we confirm that we can show (at least, a part of) needed $p$-adic continuity properties relatively easily.

\section{Coleman's formula in terms of period symbols} \label{secCf}

Coleman explicitly calculated the absolute Frobenius on Fermat curves \cite{Co2}.
The author rewrote his formula in \cite{Ka1, Ka2} as follows.

\subsection{$p$-adic period symbol}

Let $p$ be a rational prime, $\mathbb C_p$ the $p$-adic completion of the algebraic closure $\overline{\mathbb Q_p}$ of $\mathbb Q_p$, and $\mu_\infty$ the group of all roots of unity.
For simplicity, we fix embeddings $\oq \hookrightarrow \mathbb C,\mathbb C_p$
and consider any number field as a subfield of each of them.
Let $B_{\mathrm cris}\subset B_{\mathrm dR}$ be Fontaine's $p$-adic period rings.
We consider the composite ring $\bq \subset B_{\mathrm dR}$.
Let $A$ be an abelian variety with CM defined over $\oq$, $\gamma$ a closed path on $\subset A(\mathbb C)$, and $\omega$ a differential form of the second kind of $A$. 
Then the $p$-adic period integral 
\begin{align*}
\int_p\colon H^{\mathrm B}_1(A(\mathbb C),\mathbb Q) \times H_{\mathrm dR}^1(A,\oq) \ra \bq,\ (\gamma,\omega)\mt \int_{\gamma,p}\omega
\end{align*}
is defined by the comparison isomorphisms of $p$-adic Hodge theory, instead of the de Rham isomorphism (e.g., \cite[\S 6]{Ka1}, \cite[\S 5.1]{Ka2}).
Here $H^B$ denotes the singular (Betti) homology.
Then, in a similar manner to $p_K$, we can define the $p$-adic period symbol
\begin{align*}
p_{K,p}\colon I_K \times I_K \ra (\bq-\{0\})^\mathbb Q/\oq^\times
\end{align*}
satisfying $p$-adic analogues of $(\mathrm{P}_1)$, $(\mathrm{P}_2)$, $(\mathrm{P}_3)$, $(\mathrm{P}_4)$.
Here we put $(\bq-\{0\})^\mathbb Q:=\{x \in B_{\mathrm dR} \mid \exists n \in \mathbb N \tst x^n \in \bq-\{0\}\}$.
Moreover the ``ratio'' 
\begin{align*}
\left[\int_\gamma \omega_\sigma:\int_{\gamma,p} \omega_\sigma\right] \in (\mathbb C^\times \times (\bq-\{0\}))/\oq^\times
\end{align*}
depends only on $\sigma \in \HKC$ and the CM-type $\Xi$. That is, if we replace $A,\omega_\sigma,\gamma$ with $A',\omega_\sigma',\gamma'$ for the same $\Xi,\sigma$, then we have
\begin{align*}
\frac{\int_{\gamma'} \omega_\sigma'}{\int_\gamma \omega_\sigma}=
\frac{\int_{\gamma',p} \omega_\sigma'}{\int_{\gamma,p} \omega_\sigma} \in \oq^\times.
\end{align*}
Therefore we may consider the following ratio of  the symbols $[p_K:p_{K,p}]$, which is well-defined up to $\mu_\infty$.

\begin{prp}[{\cite[Proposition 4]{Ka2}}] \label{tips}
There exists a bilinear map 
\begin{align*}  
[p_K:p_{K,p}]\colon I_K \times I_K \ra (\mathbb C^\times \times(\bq-\{0\})^\mathbb Q)/(\mu_\infty\times \mu_\infty)\oq^\times
\end{align*}
satisfying the following.
\begin{enumerate}
\item Let $A,\Xi,\sigma,\omega_\sigma,\gamma$ be as in $(\mathrm{P}_1)$. Then
\begin{multline*}
[p_K:p_{K,p}](\sigma,\Xi) \\
\equiv
\begin{cases}
[(2\pi i)^{-1}\int_\gamma \omega_\sigma:(2\pi i)_p^{-1}\int_{\gamma,p} \omega_\sigma] & (\sigma \in \Xi) \\
[\int_\gamma \omega_\sigma:\int_{\gamma,p} \omega_\sigma] & (\sigma \in \HKC-\Xi)
\end{cases}
\mod (\mu_\infty\times \mu_\infty)\oq^\times.
\end{multline*}
Here $(2\pi i)_p \in B_{\mathrm cris}$ is the $p$-adic counterpart of $2\pi i$ defined in, e.g., {\rm \cite[\S 5.1]{Ka2}}.
\item We have for $\sigma,\tau \in \HKC$ and for the complex conjugation $\rho$
\begin{align*}
&[p_K:p_{K,p}](\sigma,\tau) \cdot [p_K:p_{K,p}] (\rho \circ \sigma,\tau)\equiv 1 \mod (\mu_\infty\times \mu_\infty)\oq^\times, \\
&[p_K:p_{K,p}](\sigma,\tau) \cdot [p_K:p_{K,p}] (\sigma,\rho \circ \tau)\equiv 1 \mod (\mu_\infty\times \mu_\infty)\oq^\times.
\end{align*}
\item Let $\iota\colon K'\cong K$ be an isomorphism of CM-fields. Then we have for $\sigma,\tau \in \HKC$
\begin{align*}
[p_K:p_{K,p}](\sigma,\tau) \equiv[p_{K'}:p_{K',p}]( \sigma \circ \iota,\tau \circ \iota) \mod (\mu_\infty\times \mu_\infty)\oq^\times.
\end{align*}
\item Let $K \subset L$ be a field extension of CM-fields. 
Then we have for $X \in I_L$, $Y \in I_K$
\begin{align*}
[p_K:p_{K,p}](\mathrm{Res}(X),Y)\equiv 
[p_L:p_{L,p}](X,\mathrm{Inf}(Y)) \mod (\mu_\infty\times \mu_\infty)\oq^\times.
\end{align*}
\end{enumerate}
\end{prp}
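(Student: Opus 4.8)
The plan is to use property (i) as the \emph{definition} of $[p_K:p_{K,p}](\sigma,\Xi)$ whenever $\Xi$ is an actual CM-type of $K$ realised by an abelian variety $A/\oq$, and then to extend the symbol to all of $I_K\times I_K$ by decreeing bilinearity together with the relations (ii)--(iv). This is the same bootstrap by which $p_K$ is constructed in \cite[\S 32]{Sh} and $p_{K,p}$ in \cite{Ka2}: once the symbol is well defined on the generating pairs and the relations (ii)--(iv) are checked to be consistent there, both the existence of the extension and the validity of (i)--(iv) in general become formal.

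For well-definedness on the generating pairs I would invoke the ratio-invariance recalled just before the statement: if $(A,\omega_\sigma,\gamma)$ and $(A',\omega_\sigma',\gamma')$ are two data as in $(\mathrm{P}_1)$ attached to the same $(\Xi,\sigma)$, then $\int_{\gamma'}\omega_\sigma'/\int_\gamma\omega_\sigma=\int_{\gamma',p}\omega_\sigma'/\int_{\gamma,p}\omega_\sigma\in\oq^\times$, so the pair $[\int_\gamma\omega_\sigma:\int_{\gamma,p}\omega_\sigma]$ --- and, in the holomorphic case, $[(2\pi i)^{-1}\int_\gamma\omega_\sigma:(2\pi i)_p^{-1}\int_{\gamma,p}\omega_\sigma]$ --- is independent of the chosen datum modulo the diagonal copy of $\oq^\times$. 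Here one also uses, exactly as in the definition of $p_{K,p}$, that a CM abelian variety is potentially of good reduction, so that $\int_{\gamma,p}\omega_\sigma$ really lies in $(\bq-\{0\})^\mathbb Q$. One must then enlarge the diagonal $\oq^\times$ to $(\mu_\infty\times\mu_\infty)\oq^\times$ in order to absorb the auxiliary choices entering the constructions --- notably the normalisation of $(2\pi i)_p$ (see \cite[\S 5.1]{Ka2}) and the fact that $p_K$ and $p_{K,p}$ are each themselves only pinned down modulo $\oq^\times$.

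With well-definedness in hand, properties (i)--(iv) follow by running $(\mathrm{P}_1)$--$(\mathrm{P}_4)$ for $p_K$ and their $p$-adic analogues for $p_{K,p}$ in parallel, one coordinate at a time: (i) is the definition; for (ii) one uses $(\mathrm{P}_2)$ together with its $p$-adic counterpart, which apply because complex conjugation acts compatibly on $A(\mathbb C)$, on Betti homology and, through the crystalline Frobenius, on the $p$-adic realisation, forcing the product of a period with its conjugate to be algebraic on both sides; (iii) is immediate, since an abelian variety witnessing CM of type $(K,\Xi)$ also witnesses CM of type $(K',\Xi\circ\iota)$ with literally the same differential forms and cycles, so the same pair of integrals occurs; and (iv) follows from the usual passage between CM by $L$ and CM by $K$ (restriction of scalars, or the isogeny decomposition of $A$ under $K\subset\mathrm{End}(A)\otimes\mathbb Q$), all of whose structural correspondences are defined over $\oq$ and hence act identically on the de~Rham and on the $p$-adic realisation.

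I expect the only genuine difficulty to be consistency of the extension --- that imposing bilinearity together with all of (ii)--(iv) does not over-determine the symbol --- but this is the same issue already settled for $p_K$ and for $p_{K,p}$ individually, and it transports here coordinate-wise. The truly substantive input, were the quoted ratio-invariance not taken for granted, would be its proof: that the comparison isomorphisms of $p$-adic Hodge theory are compatible with algebraic correspondences over $\oq$ and with the $K\cong\mathrm{End}(A)\otimes\mathbb Q$-action, so that replacing $(A,\omega_\sigma,\gamma)$ scales $\int_\gamma\omega_\sigma$ and $\int_{\gamma,p}\omega_\sigma$ by one and the same element of $\oq^\times$; the remaining root-of-unity bookkeeping is routine and is exactly what the factor $(\mu_\infty\times\mu_\infty)\oq^\times$ in the target records.
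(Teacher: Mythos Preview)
The paper does not actually prove this proposition: it is stated with a citation to \cite[Proposition 4]{Ka2}, and the surrounding text only records the key input (the ratio-invariance of $[\int_\gamma\omega_\sigma:\int_{\gamma,p}\omega_\sigma]$) together with the remark ``Therefore we may consider the following ratio of the symbols $[p_K:p_{K,p}]$, which is well-defined up to $\mu_\infty$.'' Your sketch is precisely the construction that sentence is pointing at, so there is nothing to compare against in this paper; your outline is consistent with the narrative the paper provides and with how $p_K$ and $p_{K,p}$ are built individually.

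One small correction to your account: the reason the target must be enlarged from the diagonal $\oq^\times$ to $(\mu_\infty\times\mu_\infty)\oq^\times$ is not primarily the normalisation of $(2\pi i)_p$, nor the fact that $p_K,p_{K,p}$ are only defined modulo $\oq^\times$ (that ambiguity is already absorbed by the diagonal $\oq^\times$). The genuine source is the passage from $\mathbb Z$-coefficients to $\mathbb Q$-coefficients in $I_K$: to evaluate the symbol on $\sum c_\sigma\sigma$ with $c_\sigma\in\mathbb Q$ one must extract roots of periods on each side, and an $n$th root in $\mathbb C^\times$ or in $(\bq-\{0\})$ is only determined up to $\mu_n$. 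This is why the codomain involves $(\bq-\{0\})^{\mathbb Q}$ and why the ambiguity is exactly $\mu_\infty\times\mu_\infty$ rather than something smaller. Apart from this, your plan (define on $(\sigma,\Xi)$ via (i), extend bilinearly, and verify (ii)--(iv) coordinate-wise from $(\mathrm P_2)$--$(\mathrm P_4)$ and their $p$-adic analogues) is the expected argument.
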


\subsection{Coleman's formula}

Theorem \ref{Cf} below is essentially due to Coleman \cite[Theorems 1.7, 3.13]{Co2}.
Note that the original formula does not have a root of unity ambiguity.
First we prepare some notations.
We assume that $p$ is an odd prime.

\begin{dfn} \label{dfn}
\begin{enumerate}
\item Let $\mathbb C_p^1:=\{z \in \mathbb C_p^\times \mid |z|_p=1\}$.
We fix a group homomorphism 
\begin{align*}
\exp_p \colon \mathbb C_p \ra \mathbb C_p^1
\end{align*}
which coincides with the usual power series $\exp_p(z):=\sum_{k=0}^\infty \frac{z^k}{k!}$ on the convergence region. 
For $\alpha \in \mathbb C_p^\times$, $\beta \in \mathbb C_p$, we put
\begin{align*}
\alpha^\beta:=\exp_p(\beta \log_p \alpha)
\end{align*}
with $\log_p$ Iwasawa's $p$-adic $\log$ function.
\item For $z \in \mathbb C_p^\times$, we put
\begin{align*}
z^*:= \exp_p(\log_p(z)), && z^\flat:=p^{\op z}z^*.
\end{align*}
Here we define $\op z \in \mathbb Q$ by $|z|_p=|p|_p^{\op z}$.
Note that $z\equiv z^\flat \bmod \mu_\infty$ $(z\in \mathbb C_p^\times)$.
\item We define the $p$-adic gamma function on $\mathbb Q_p$ as follows.
\begin{enumerate}
\item On $\mathbb Z_p$, $\Gamma_p(z)$ denotes Morita's $p$-adic gamma function 
which is the unique continuous function $\Gamma_p\colon \mathbb Z_p \ra \mathbb Z_p^\times$ satisfying
\begin{align*}
\Gamma_p(n):=(-1)^n \prod_{1\leq k \leq n-1,\ p\nmid k} k \quad(n\in \mathbb N).
\end{align*}
\item On $\mathbb Q_p-\mathbb Z_p$, we use $\Gamma_p\colon\mathbb Q_p-\mathbb Z_p \ra \mathcal O_{\overline{\mathbb Q_p}}^\times$ defined in 
{\rm\cite[Lemma 4.2]{Ka1}},
which is a continuous function satisfying
\begin{align*}
\Gamma_p(z+1)=z^*\Gamma_p(z),\ \Gamma_p(2z)=2^{2z-\frac{1}{2}}\Gamma_p(z)\Gamma_p(z+\tfrac{1}{2}).
\end{align*}
Such a continuous function on $\mathbb Q_p-\mathbb Z_p$ is unique up to multiplication by $\mu_\infty$.
\end{enumerate}
\item For $z\in \mathbb Z_p$, we define $z_0 \in \{1,2,\dots,p\}$, $z_1\in \mathbb Z_p$ by
\begin{align*}
z=z_0+pz_1.
\end{align*}
Note that when $p\mid z$, we put $z_0=p$, instead of $0$. 
\item Let $\wg$ be the Weil group defined as
\begin{align*}
\wg:=\{\tau \in \mathrm{Gal}(\overline{\mathbb Q_p}/\mathbb Q_p) \mid \tau|_{\mathbb Q_p^{ur}}=\sigma_p^{\deg \tau} \text{ with } \deg \tau \in \mathbb Z\}.
\end{align*}
Here $\mathbb Q_p^{ur}$ denotes the maximal unramified extension of $\mathbb Q_p$,
$\sigma_p$ the Frobenius automorphism on $\mathbb Q_p^{ur}$.
\item We define the action of $\wg$ on $\mathbb Q\cap [0,1)$ by identifying $\mathbb Q\cap [0,1)= \mu_\infty$. Namely
\begin{align*}
\tau(\tfrac{a}{N}):=\tfrac{b}{N} \quad \text{if} \quad \tau(\zeta_N^a)=\zeta_N^b \quad(\tau \in \wg).
\end{align*}
\item Let $\Phi_{\mathrm cris}$ be the absolute Frobenius automorphism on $B_{\mathrm cris}$.
We consider the following action of $\wg$ on $\bq\cong B_{\mathrm cris}\otimes_{\mathbb Q^{ur}} \overline{\mathbb Q_p}$:
\begin{align*}
\Phi_\tau:=\Phi_{\mathrm cris}^{\deg \tau}\otimes \tau \quad(\tau \in \wg).
\end{align*}
\item For $\frac{a}{N}\in \mathbb Q \cap (0,1)$ we put
\begin{multline*}
P(\tfrac{a}{N}) 
:=\frac{\Gamma_\infty(\frac{a}{N}) 
\cdot (2\pi i)_p^{\frac{1}{2}-\langle\frac{a}{N}\rangle} 
p_{\mathbb Q(\zeta_N),p}\left(\mathrm{id},\sum_{(b,N)=1} \left(\frac{1}{2}-\langle \frac{ab}{N}\rangle\right) \sigma_b\right)}
{(2\pi i)^{\frac{1}{2}-\langle\frac{a}{N}\rangle} p_{\mathbb Q(\zeta_N)}\left(\mathrm{id},\sum_{(b,N)=1} \left(\frac{1}{2}-\langle \frac{ab}{N}\rangle\right) \sigma_b\right)} \\
\in (\bq-\{0\})^\mathbb Q/\mu_\infty.
\end{multline*}
This definition makes sense since 
\begin{align*}
\frac{\Gamma_\infty(\frac{a}{N})}
{(2\pi i)^{\frac{1}{2}-\langle\frac{a}{N}\rangle} p_{\mathbb Q(\zeta_N)}\left(\mathrm{id},\sum_{(b,N)=1} \left(\frac{1}{2}-\langle \frac{ab}{N}\rangle\right) \sigma_b\right)} 
\in \oq \subset \bq
\end{align*}
by {\rm(\ref{ef})} and the ratio $[p_K:p_{K,p}]$ is well-defined up to $\mu_\infty$ by {\rm Proposition \ref{tips}}.
\end{enumerate}
\end{dfn}

\begin{rmk} \label{rmk4dfn}
\begin{enumerate}
\item Let $\mu_{p-1}$ be the group of all $(p-1)$st roots of unity, $p^\mathbb Z:=\{p^n \mid n \in \mathbb Z\}$, $1+p\mathbb Z_p:=\{1+pz \mid z \in \mathbb Z_p\}$.
Then we have the canonical decomposition 
\begin{align*}
\begin{array}{ccccccc}
\mathbb Q_p^\times &\ra & \mu_{p-1} &\times& p^\mathbb Z &\times& 1+ p\mathbb Z_p, \\
z &\mt& (\omega(zp^{-\op z})&,&p^{\op z}&,&z^*),
\end{array}
\end{align*}
where $\omega$ denotes the Teichm\"uller character.
The maps $z \mt z^*,z^\flat$ provide a similar (but non-canonical) decomposition of $\mathbb C_p^\times$.
Moreover, we note that the maps $z \mt \exp_p(z),z^*,z^\flat$ are continuous homomorphisms.
\item We easily see that
\begin{align*}
\tau(z)=\langle p z\rangle,\ \tau^{-1}(z)=z_1+1 \quad (z\in \mathbb Z_{(p)}\cap(0,1),\ \tau \in \wg,\ \deg \tau=1).
\end{align*}
\end{enumerate}
\end{rmk}

\begin{thm}[{\cite[Theorem 3]{Ka2}}] \label{Cf}
Let $p$ be an odd prime.
\begin{enumerate}
\item Assume that $z\in \mathbb Z_{(p)}\cap(0,1)$. Then we have
\begin{align*}
\Gamma_p(z)\equiv p^{\frac{1}{2}-\tau^{-1}(z)}\frac{P(z)}{\Phi_{\tau}(P(\tau^{-1}(z)))} \bmod \mu_\infty
\quad (\tau\in \wg,\ \deg \tau=1).
\end{align*}
\item Assume that $z\in (\mathbb Q-\mathbb Z_{(p)})\cap(0,1)$. Then we have
\begin{align*}
\frac{\Gamma_p(\tau(z))}{\Gamma_p(z)}\equiv \frac{p^{(z-\tau(z))\op z}P(\tau(z))}{\Phi_\tau (P(z))}\mod \mu_\infty \quad (\tau\in \wg).
\end{align*}
\end{enumerate}
\end{thm}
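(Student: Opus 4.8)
The plan is to obtain Theorem~\ref{Cf} as a reformulation of Coleman's cohomological computation \cite[Theorems 1.7, 3.13]{Co2} of the absolute Frobenius on Fermat curves, transported into the language of the period symbols $p_K,p_{K,p}$ for $K=\mathbb{Q}(\zeta_N)$. Recall that Coleman determines the crystalline Frobenius $\Phi_{\mathrm{cris}}$ on the de Rham classes $[\eta_{r,s}]\in H^1_{\mathrm{dR}}(F_N)$ explicitly: it carries the eigencomponent indexed by $(r,s)$ to the one indexed by $(pr\bmod N,\,ps\bmod N)$, with scalar a product of a Morita $\Gamma_p$-value and a power of the Dwork uniformizer $\pi$ ($\pi^{p-1}=-p$). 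On the other hand, the $p$-adic period integrals $\int_{\gamma,p}\eta_{r,s}\in\bq$ are the matrix entries of the $p$-adic Hodge comparison isomorphism with respect to a Betti lattice fixed by $\Phi_\tau=\Phi_{\mathrm{cris}}^{\deg\tau}\otimes\tau$; and the Galois twist $\tau$ in $\Phi_\tau$ realises exactly the index shift $\tfrac aN\mapsto\langle\tfrac{pa}{N}\rangle=\tau(\tfrac aN)$ of Remark~\ref{rmk4dfn}(ii). So Coleman's formula translates directly into a functional equation for $\int_{\gamma,p}\eta_{r,s}$ under $\Phi_\tau$.

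The steps, in order. (1) Establish the $p$-adic counterpart of (\ref{ef}): via the $p$-adic version of $(\mathrm{P}_1)$ (Proposition~\ref{tips}(i)) together with the CM-type $\Xi_{r,s}$ of (\ref{xi}) and relations $(\mathrm{P}_3),(\mathrm{P}_4)$, express $p_{\mathbb{Q}(\zeta_N),p}(\mathrm{id},\sum_{(b,N)=1}(\tfrac12-\langle\tfrac{ab}{N}\rangle)\sigma_b)$ through the $p$-adic Fermat periods $\int_{\gamma,p}\eta_{r,s}$, just as (\ref{ef}) arises from the complex periods via Gross--Rohrlich (\ref{gamma}). By its definition, $P(\tfrac aN)$ then encodes --- up to $\mu_\infty$ and an algebraic factor supplied by (\ref{ef}) and (\ref{gamma}) --- the ratio of the $p$-adic Fermat period to its complex counterpart with the $(2\pi i)_p$-versus-$(2\pi i)$ normalisation of $(\mathrm{P}_1)$. (2) Push Coleman's functional equation for $\int_{\gamma,p}\eta_{r,s}$ through this identification: the complex periods on both sides are controlled by (\ref{gamma}) and contribute only powers of $2\pi i$ and algebraic numbers, so what remains is a relation among a $\Gamma_p$-value, a power of $p$, and the ratio $P(z)/\Phi_\tau(P(\tau^{-1}(z)))$ (resp.\ $P(\tau(z))/\Phi_\tau(P(z))$). (3) Treat the two ranges separately. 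If $z\in\mathbb{Z}_{(p)}\cap(0,1)$ (i.e.\ $p\nmid N$, good reduction) Coleman's formula specialises to a version of the Gross--Koblitz formula; the relation $\pi^{p-1}=-p$ converts the $\pi$-power into the factor $p^{\frac12-\tau^{-1}(z)}$ and absorbs one root of unity, yielding (i). If $z\in(\mathbb{Q}-\mathbb{Z}_{(p)})\cap(0,1)$ (i.e.\ $p\mid N$, bad reduction) one is in the range of \cite[Theorem 1.7]{Co2}, where $\Gamma_p$ on $\mathbb{Q}_p-\mathbb{Z}_p$ is itself only determined up to $\mu_\infty$ by its functional equations, so only the ratio $\Gamma_p(\tau(z))/\Gamma_p(z)$ is expressible, and the $z^\flat=p^{\op z}z^*$ bookkeeping produces $p^{(z-\tau(z))\op z}$, yielding (ii). (4) Finally, extend (ii) from $\deg\tau=1$ to arbitrary $\tau\in\wg$ by composing the degree-one identity with the Galois action, using $\Phi_{\tau\tau'}=\Phi_\tau\Phi_{\tau'}$ and the multiplicativity of $P$, of $\Phi_\tau$, and of the $\Gamma_p$-functional equations.

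The main obstacle is the normalisation bookkeeping in this dictionary. Matching Coleman's cohomological normalisation of $\Phi_{\mathrm{cris}}$ to the period-symbol normalisation requires tracking: (a) the exact power of $p$ --- equivalently the Dwork uniformizer and the ``$\flat$ versus $*$'' correction $p^{\op z}$ --- which is the source of the delicate factors $p^{\frac12-\tau^{-1}(z)}$ and $p^{(z-\tau(z))\op z}$; (b) the $(2\pi i)$ versus $(2\pi i)_p$ factors, which must cancel those hidden in the CM-normalisation of $(\mathrm{P}_1)$; and (c) the combinatorics of the CM-type (\ref{xi}) aligning with the Frobenius index shift $\tfrac aN\mapsto\langle\tfrac{pa}{N}\rangle$. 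A secondary subtlety is checking that every auxiliary choice --- the homomorphism $\exp_p$, the non-canonical $z\mapsto z^*$, the path $\gamma$, the representative of $p_{K,p}$ modulo $\oq^\times$, and $(2\pi i)_p$ --- contributes only a root-of-unity ambiguity, so that the identities hold exactly modulo $\mu_\infty$ with all ambiguities mutually consistent; this is precisely why Theorem~\ref{Cf}, unlike Coleman's original equalities, carries a $\mu_\infty$-ambiguity.
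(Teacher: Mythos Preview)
Your proposal is a reasonable outline of how Theorem~\ref{Cf} is obtained in the cited reference \cite[Theorem~3]{Ka2}: take Coleman's explicit computation of the absolute Frobenius on $H^1_{\mathrm{dR}}(F_N)$ and translate it through the period-symbol dictionary into a statement about $P(z)$. The bookkeeping you list (Dwork-$\pi$ versus $p$-powers, $(2\pi i)$ versus $(2\pi i)_p$, alignment of the CM-type $\Xi_{r,s}$ with the Frobenius index shift, and the $\mu_\infty$-ambiguities coming from $\exp_p$, $z^*$, $\gamma$, etc.) is exactly the content of that rewriting.

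However, the present paper does \emph{not} prove Theorem~\ref{Cf}; it is stated with citation and no proof is supplied in the text. The paper's own contribution goes in the opposite direction: rather than deducing the formula from Coleman's explicit matrices, \S\ref{secmr} shows that one can recover a large part of~(i) \emph{without} any explicit Frobenius computation. Assuming only the $p$-adic continuity of the right-hand sides $G(z)$, $G(z,\tau(z))$ (verified independently in \S\ref{secpc} from the limit formula~(\ref{limit})), one proves the multiplication formula~(\ref{mf4G}) directly from the classical multiplication formulas for $\Gamma_\infty$ and $B_1$, and then (Theorem~\ref{mthm}) that the constants $c_n$ of $f=G/\Gamma_p$ satisfy $c_1=c_2=\cdots$. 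Corollary~\ref{crloffe} then forces $G(z)\equiv a^{z-\frac12}b^{z_1+\frac12}\Gamma_p(z)\bmod\mu_\infty$ (Corollary~\ref{maincrl}), reducing the full statement to the computation on a \emph{single} Fermat curve (Remark~\ref{onecurve}). So your route is the original explicit-computation approach that the paper is quoting as background; the paper's own argument is the functional-equation/continuity characterization that is meant to replace most of that computation.
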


\begin{rmk}
As a result, we see that the right-hand sides of {\rm Theorem \ref{Cf}-(i), (ii)} are $p$-adic continuous on $z$, $(z,\tau(z))$ respectively, since the left-hand sides are so.
We use only the $p$-adic continuity in the next section, in order to recover {\rm Theorem \ref{Cf}-(i)}.
\end{rmk}

\section{Main results} \label{secmr}

Morita's $p$-adic gamma function $\Gamma_p \colon \mathbb Z_p \ra \mathbb Z_p^\times$ is the unique continuous function satisfying
\begin{align} \label{cofgp}
\Gamma_p(0)=1,\ 
\frac{\Gamma_p(z+1)}{\Gamma_p(z)}=
\begin{cases}
-z & (z \in \mathbb Z_p^\times), \\
-1 & (z \in p\mathbb Z_p).
\end{cases}
\end{align}  
In this section, we study other functional equations characterizing $\Gamma_p$ and 
provide an alternative proof of Coleman's formula in the case $z \in \mathbb Z_{(p)}$.
Strictly speaking, 
we only ``assume'' that the right-hand sides of Theorem \ref{Cf}-(i), (ii) are continuous on $z$, $(z,\tau(z))$ respectively (of course, this is correct).
Then we can recover a ``large part'' (Corollary \ref{maincrl}) of Theorem \ref{Cf}-(i).
We assume that $p$ is an odd prime.

\subsection{A characterization of Morita's $p$-adic gamma function}

$\Gamma_p(z)$ satisfies the following $p$-adic analogues of multiplication formulas, which we consider only up to roots of unity in this paper.
For the detailed formulation and its proof, see \cite[``Basic properties of $\Gamma_p$'' in \S 2 of Chap.~IV]{Ko}.

\begin{prp} \label{pmult}
Let $d \in \mathbb N$ with $p\nmid d$. Then we have for $z \in \mathbb Z_p$
\begin{align} \label{dmult}
\prod_{k=0}^{d-1} \Gamma_p(z+\tfrac{k}{d}) \equiv d^{1-dz+(dz)_1} \Gamma_p(dz) \mod \mu_\infty.
\end{align}
\end{prp}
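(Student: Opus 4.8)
The plan is to establish the $p$-adic multiplication formula \eqref{dmult} by reducing it to the classical Gauss multiplication formula for Morita's $\Gamma_p$ via continuity, exactly mirroring the structure of the archimedean argument used in Proposition \ref{toy}. First I would recall that on the positive integers the defining product $\Gamma_p(n)=(-1)^n\prod_{1\le k\le n-1,\,p\nmid k}k$ makes the identity \eqref{dmult} a finite combinatorial statement: writing $dz=dn$ with $n\in\mathbb N$, the left-hand side $\prod_{k=0}^{d-1}\Gamma_p(n+\frac{k}{d})$ does not directly make sense at integer $n$ for the fractional shifts, so instead I would work along the subset $z\in\frac1d\mathbb Z$ with $z$ chosen so that $dz\in\mathbb N$ and each $z+\frac{k}{d}\in\frac1d\mathbb N$, then invoke the known archimedean-style combinatorial evaluation of $\Gamma_p$ on $\frac1d\mathbb Z_{\ge 0}$ together with the power-of-$d$ normalizing factor. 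The key point is that both sides of \eqref{dmult} are continuous functions of $z\in\mathbb Z_p$ up to $\mu_\infty$: $\Gamma_p$ is continuous on $\mathbb Z_p$ by Morita's theorem, the finitely many translates $\Gamma_p(z+\frac kd)$ are continuous, $z\mapsto dz$ is continuous, $(dz)_1$ is continuous in $z$ (since $z\mapsto z_0,z_1$ are locally constant resp.\ continuous on $\mathbb Z_p$), and $d^{1-dz+(dz)_1}=\exp_p((1-dz+(dz)_1)\log_p d)$ is continuous because $\exp_p$ and scalar multiplication are continuous.

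Next I would verify the identity on a dense subset. The natural dense subset of $\mathbb Z_p$ adapted to the problem is $\{z\in\mathbb Z_{(p)}\cap[0,\infty) : dz\in\mathbb Z\}$, or more simply the set of $z=\frac{m}{d}$ with $m\in\mathbb Z_{\ge 0}$ and $p\nmid$ (nothing extra needed since $p\nmid d$), which is dense in $\mathbb Z_p$ because $p\nmid d$ means $\frac1d$ is a $p$-adic unit and $\mathbb Z_{\ge0}$ is dense. On such $z$ each $z+\frac kd=\frac{m+k}{d}$ again lies in $\frac1d\mathbb Z_{\ge0}$, and I can evaluate $\Gamma_p$ at these points using the reflection/translation relations \eqref{cofgp} to reduce to values at nonnegative integers, obtaining an explicit product of integers prime to $p$; on the right, $(dz)_1=(m)_1$ is the "higher part" of $m$ in its base-$p$-type decomposition $m=m_0+pm_1$. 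The resulting equality between the two explicit finite products — after stripping off roots of unity — is precisely the content of the classical Gross multiplication formula for $\Gamma_p$, or can be checked directly by counting how many factors $k$ with $p\nmid k$ occur in each product and matching the powers of $d$; this is the routine computation I would not grind through here, citing instead \cite[Chap.~IV, \S2]{Ko}.

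Having matched both sides on a dense subset and having established continuity of both sides modulo $\mu_\infty$, the identity \eqref{dmult} extends to all $z\in\mathbb Z_p$. One technical subtlety deserves care: "continuity modulo $\mu_\infty$" must be interpreted correctly, since $\mu_\infty$ is dense in $\mathbb C_p^1$ and the quotient $\mathbb C_p^\times/\mu_\infty$ carries no good topology. The clean way to handle this is to choose the genuine continuous representative $\Gamma_p$ on $\mathbb Z_p$ (Morita's, landing in $\mathbb Z_p^\times$) and a genuine continuous representative of the right-hand side, show the ratio is a continuous function $\mathbb Z_p\to\mathbb C_p^\times$ taking values in $\mu_\infty$ on a dense set, and conclude that this ratio is locally constant with values in $\mu_\infty$ hence equals a fixed root of unity on each residue disc — which is all that $\equiv\bmod\mu_\infty$ asserts.

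The main obstacle I anticipate is not the continuity bookkeeping but pinning down the exponent $1-dz+(dz)_1$ precisely, including its behavior when $p\mid dz$ (recall the convention $z_0=p$ rather than $0$ in that case): one must check that the power of $d$ produced by the combinatorial count of unit-factors in $\prod_{k=0}^{d-1}\Gamma_p(\frac{m+k}{d})$ versus $\Gamma_p(dz)$ is exactly $1-dz+(dz)_1$ and not off by a constant, and that any discrepancy is absorbed into $\mu_\infty$ only when it genuinely is a root of unity. Verifying this on integer test points $z=\frac md$ and confirming the two sides' $p$-adic valuations agree (forcing the $d$-power exponents to match, since $d$ is a $p$-adic unit the valuations contribute nothing — so the matching is really about the unit parts and one must instead compare via the $\Gamma_p(n)$ product formula directly) is the delicate step, after which density and continuity finish the proof.
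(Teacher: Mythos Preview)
The paper does not supply its own proof of this proposition; it simply cites \cite[Chap.~IV, \S 2]{Ko}, where the multiplication formula is proved by the standard functional-equation method: one checks that both sides of \eqref{dmult} have the same ratio under $z\mapsto z+1$ (via \eqref{cofgp}), so their quotient is constant by continuity, and then one evaluates that constant. Since you ultimately cite Koblitz as well, your proposal and the paper agree at the level of attribution.

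The extra scaffolding you add, however, contains a genuine gap. You propose to verify \eqref{dmult} on the dense set $\{m/d:m\in\mathbb Z_{\ge 0}\}$ by ``reduc[ing] to values at nonnegative integers'' via the translation relation \eqref{cofgp}. But \eqref{cofgp} shifts the argument by $1$, so it relates $\Gamma_p((m+k)/d)$ only to the other values $\Gamma_p(j/d)$ with $j\equiv m+k\bmod d$, never to $\Gamma_p$ at an integer unless $d\mid(m+k)$. There is no explicit finite-product expression for $\Gamma_p$ at a general point of $\tfrac1d\mathbb Z$; those values are determined only by continuity from the integers, which is precisely what your density argument was supposed to avoid. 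The verification on the dense set is therefore no easier than the full statement, and the argument becomes circular (or simply collapses to the citation of \cite{Ko}). A secondary issue: your handling of ``continuity modulo $\mu_\infty$'' concludes that a continuous map $\mathbb Z_p\to\mathbb C_p^\times$ landing in $\mu_\infty$ on a dense set must be locally constant. This is false as stated, since $\mu_\infty$ is not closed in $\mathbb C_p^\times$; the correct fix is to observe that both sides of \eqref{dmult} actually lie in $\mathbb Z_p^\times$, where $\mu_\infty\cap\mathbb Z_p^\times=\mu_{p-1}$ is finite and discrete, after which your conclusion does follow.
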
  

Note that if $p \mid d$, then $z +\frac{k}{d}$ is not in the domain of definition of Morita's $\Gamma_p$.
In the rest of this subsection, we show that multiplication formulas (\ref{dmult})
and some conditions characterize Morita's $p$-adic gamma function (at least up to $\mu_\infty$).

\begin{prp} \label{feef}
Assume a continuous function $f(z) \colon \mathbb Z_p \ra \mathbb C_p^\times$ satisfies 
\begin{align} \label{fe}
\prod_{k=0}^{d-1} f(z+\tfrac{k}{d})\equiv f(dz) \mod \mu_\infty \quad (p\nmid d).
\end{align}
Then the following holds.
\begin{enumerate}
\item $\frac{f(z+1)}{f(z)} \bmod \mu_\infty$ depends only on $\op z$.
\item The values  
\begin{align*}
c_k:=\left(\frac{f(p^k+1)}{f(p^k)}\right)^\flat
\end{align*}
characterize the function $f(z)$ up to $\mu_\infty$. More precisely, for $z \in \mathbb Z_p$, we write the $p$-adic expansion of $z-1$ as
\begin{align*}
z-1=\sum_{k=0}^\infty x_k p^k \quad (x_k \in \{0,1,\dots,p-1\}). 
\end{align*}
Then we have
\begin{align*}
f(z) \equiv \prod_{k=0}^\infty \alpha_k^{x_k-\frac{p-1}{2}} \mod \mu_\infty \quad \text{with} \quad \alpha_k:=c_k \prod_{i=0}^{k-1}c_i^{p^{k-1-i}(p-1)}.
\end{align*}
\end{enumerate}
Conversely, assume that
\begin{align} \label{explicit}
f\left(1+\sum_{k=0}^\infty x_k p^k\right) \equiv \prod_{k=0}^\infty \alpha_k^{x_k-\frac{p-1}{2}} \mod \mu_\infty \quad (x_k \in \{0,1,\dots,p-1\}) 
\end{align}
for constants $\alpha_k \in \mathbb C_p^\times$ satisfying $\alpha_k \ra 1$ $(k \ra \infty)$.
Then $f(z)$ satisfies the functional equations {\rm (\ref{fe})}.
\end{prp}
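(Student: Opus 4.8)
The plan is to exploit the multiplication-type functional equation (\ref{fe}) by ``telescoping'', reducing everything --- via the homomorphisms $\log_p$ and $\op$, both continuous and vanishing on $\mu_\infty$, and the fact that $x\equiv y\bmod\mu_\infty$ iff $\log_p x=\log_p y$ and $\op x=\op y$ --- to two elementary identities. Throughout put $g(z):=f(z+1)/f(z)$, a continuous map $\mathbb Z_p\to\mathbb C_p^\times$, and $g^\flat:=(\,\cdot\,)^\flat\circ g$; recall (Definition \ref{dfn}, Remark \ref{rmk4dfn}) that $w\mapsto w^\flat$ is a continuous homomorphism of $\mathbb C_p^\times$ vanishing on $\mu_\infty$ with $w\equiv w^\flat\bmod\mu_\infty$, so $g^\flat$ is continuous. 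For (i): replacing $z$ by $z+\tfrac{1}{d}$ in (\ref{fe}) gives $\prod_{k=1}^{d}f(z+\tfrac{k}{d})\equiv f(dz+1)$, and dividing by (\ref{fe}) at $z$ makes the left-hand product telescope, yielding $g(z)\equiv g(dz)\bmod\mu_\infty$ for all $z\in\mathbb Z_p$ and all $d\in\mathbb N$ with $p\nmid d$. Applying $\flat$ turns this into the \emph{exact} equality $g^\flat(z)=g^\flat(dz)$; since $\{n\in\mathbb N\mid p\nmid n\}$ is dense in $\mathbb Z_p^\times$ and $g^\flat$ is continuous, we get $g^\flat(uz)=g^\flat(z)$ for every $u\in\mathbb Z_p^\times$, and as the $\mathbb Z_p^\times$-orbits in $\mathbb Z_p$ are $\{0\}$ together with the sets $\{z\mid\op z=k\}$, this shows that $g^\flat(z)$ --- hence $g(z)\bmod\mu_\infty$ --- depends only on $\op z$.

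For (ii), let $c_k$ be the value of $g^\flat$ on $\{\op z=k\}$, so $c_k=(f(p^k+1)/f(p^k))^\flat$. For $n\in\mathbb N$ the identity $f(n+1)=f(1)\prod_{j=1}^ng(j)$ gives, modulo $\mu_\infty$,
\begin{align*}
f(n+1)&\equiv f(1)\prod_{k\geq0}c_k^{\nu_k(n)}, \\
\nu_k(n)&:=\lfloor n/p^k\rfloor-\lfloor n/p^{k+1}\rfloor=x_k+(p-1)\textstyle\sum_{i>k}x_i\,p^{i-1-k},
\end{align*}
where $x_k=x_k(n+1)$ is the $k$-th base-$p$ digit of $n=\sum_kx_kp^k$ (in general write $x_k(w)$ for the $k$-th base-$p$ digit of $w-1$). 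Specialising $n=p^k$ gives $f(1+p^k)\equiv f(1)\,\alpha_k\bmod\mu_\infty$ with $\alpha_k$ as in the statement; letting $k\to\infty$ and using continuity of $f$ and $\flat$ forces $\alpha_k\to1$. Hence $\op\alpha_k=0$ for $k\gg 0$, the series $\sum_k\log_p\alpha_k$, $\sum_k\op\alpha_k$ converge, and --- since each $x_k(\cdot)\colon\mathbb Z_p\to\{0,\dots,p-1\}$ is locally constant and $x_k-\tfrac{p-1}{2}\in\mathbb Z$ ($p$ odd) --- the function $F(z):=\prod_{k\geq0}\alpha_k^{x_k(z)-\frac{p-1}{2}}$ converges and is continuous on $\mathbb Z_p$. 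It therefore suffices to prove $\varphi(f(z))=\varphi(F(z))$ on $\mathbb Z_p$ for $\varphi\in\{\log_p,\op\}$. For $z\in\mathbb N$, using $\varphi(\alpha_k)=\varphi(c_k)+(p-1)\sum_{i<k}p^{k-1-i}\varphi(c_i)$ and reorganising the finite double sum above gives $\sum_k\nu_k(n)\varphi(c_k)=\sum_kx_k\varphi(\alpha_k)$, hence $\varphi(f(z))=\varphi(f(1))+\sum_kx_k(z)\varphi(\alpha_k)$, and as the right-hand side is continuous in $z$ this persists on all of $\mathbb Z_p$. Since $\varphi(F(z))=\sum_k\big(x_k(z)-\tfrac{p-1}{2}\big)\varphi(\alpha_k)$, we are left with the single equality $\varphi(f(1))=-\tfrac{p-1}{2}\sum_k\varphi(\alpha_k)$. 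To obtain it, apply $\varphi$ to (\ref{fe}) to get the distribution relation $\sum_{k=0}^{d-1}\varphi\big(f(z+\tfrac{k}{d})\big)=\varphi(f(dz))$, substitute $\varphi(f(w))=\varphi(f(1))+\sum_ix_i(w)\varphi(\alpha_i)$ into it, and invoke the base-$p$ digit identity
\begin{align} \label{digitB1}
\sum_{k=0}^{d-1}x_i\big(z+\tfrac{k}{d}\big)=x_i(dz)+(d-1)\tfrac{p-1}{2}\qquad(z\in\mathbb Z_p,\ i\geq0,\ d\in\mathbb N,\ p\nmid d);
\end{align}
this collapses the relation to $(d-1)\varphi(f(1))=-(d-1)\tfrac{p-1}{2}\sum_k\varphi(\alpha_k)$, and any $d\geq2$ prime to $p$ gives the claim. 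Thus $f\equiv F\bmod\mu_\infty$ on $\mathbb Z_p$, and since $F$ is built solely from the $c_k$, the $c_k$ determine $f$ up to $\mu_\infty$.

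For the converse, assume (\ref{explicit}) with $\alpha_k\to1$, i.e.\ $f=F$ with $F$ as above. For $\varphi\in\{\log_p,\op\}$ and any $z\in\mathbb Z_p$, inserting $\varphi(f(w))=\sum_i\big(x_i(w)-\tfrac{p-1}{2}\big)\varphi(\alpha_i)$ and using (\ref{digitB1}),
\begin{align*}
\sum_{k=0}^{d-1}\varphi\big(f(z+\tfrac{k}{d})\big)-\varphi(f(dz))
=\sum_i\varphi(\alpha_i)\Big(\sum_{k=0}^{d-1}x_i\big(z+\tfrac{k}{d}\big)-x_i(dz)-(d-1)\tfrac{p-1}{2}\Big)=0,
\end{align*}
so $\prod_{k=0}^{d-1}f(z+\tfrac{k}{d})\equiv f(dz)\bmod\mu_\infty$; that is, $f$ satisfies (\ref{fe}).

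Everything above is routine telescoping together with density and continuity arguments, with two genuine inputs, both in part (ii). The first is that the infinite product in the statement only makes sense once one knows $\alpha_k\to1$, which continuity of $f$ supplies through $f(1+p^k)\equiv f(1)\alpha_k$. The second --- the real crux --- is the base-$p$ digit identity (\ref{digitB1}), a $p$-adic shadow of the distribution relation $\sum_{k=0}^{d-1}B_1(x+\tfrac{k}{d})=B_1(dx)$ for $B_1(x)=x-\tfrac{1}{2}$ used already in Proposition \ref{toy}; both of its sides are continuous in $z$, so I would prove it by reducing to a dense subset of $z$ followed by a short finite computation with $p$-adic digit expansions. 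Once (\ref{digitB1}) is in hand, the ``integration constant'' $f(1)$ in part (ii) is forced and the rest follows.
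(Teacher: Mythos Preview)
Your argument is correct. Part (i) matches the paper's exactly --- telescope the functional equation to get $g(z)\equiv g(dz)$, then pass to all units by density --- though you are more explicit about the continuity step (the paper just says ``then the assertion (i) is clear''). Part (ii) also begins the same way: both you and the paper obtain $f(n+1)\equiv f(1)\prod_k \alpha_k^{x_k}$ by counting $\#\{1\le j\le n:\op j=k\}$.

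The genuine divergence is in how the ``integration constant'' $f(1)$ is pinned down and how the converse is handled. The paper uses the single instance $d=2$, $z=\tfrac12$ of (\ref{fe}) to get $f(\tfrac12)\equiv 1$; since $-\tfrac12=\sum_k\tfrac{p-1}{2}p^k$, this immediately gives $f(1)\equiv\prod_k\alpha_k^{-(p-1)/2}$. For the converse the paper proceeds by showing $g(z)\bmod\mu_\infty$ depends only on $\op z$, deducing the reflection $f(z)f(1-z)\equiv 1$ from the digit relation $x_k+x_k'=p-1$, checking $z=0$ by hand, and then inducting on $z\in\mathbb N$ using $\op(dz+k)=\op(z+\tfrac kd)$. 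You instead isolate a single auxiliary lemma, the base-$p$ digit identity (\ref{digitB1}), and use it uniformly for both the determination of $f(1)$ and the converse. This is a cleaner and more symmetric packaging, and it makes the parallel with the distribution relation for $B_1$ transparent; the price is that (\ref{digitB1}) itself needs an argument. Your sketch (``reduce to a dense subset, then a short finite computation'') is a little thin as stated, since $z+\tfrac kd$ is never an integer for $k\neq 0$; but the identity does follow readily: the increment $y_i(w)-y_i(w-1)$ depends only on $\op w$ (it is $0$, $1$, or $-(p-1)$ according as $\op w<i$, $=i$, or $>i$), so both sides of (\ref{digitB1}) change by the same amount when $z$ is shifted by $\tfrac1d$, hence their difference is constant, and averaging over $z\bmod p^{i+1}$ gives the value $(d-1)\tfrac{p-1}{2}$. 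With this in place your proof goes through.
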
 

\begin{proof}
We suppress $\bmod \mu_\infty$. 
Assume (\ref{fe}).
Replacing $z$ with $z+\frac{1}{d}$, we obtain $\prod_{k=1}^{d} f(z+\frac{k}{d}) \equiv f(dz+1)$. It follows that $\frac{f(z+1)}{f(z)}\equiv \frac{f(dz+1)}{f(dz)}$.
That is,
\begin{align*}
g(z):=\frac{f(z+1)}{f(z)}\equiv g(dz)\quad (p\nmid d \in \mathbb N).
\end{align*}
Then the assertion (i) is clear.
Let $c_k:=(g(p^k))^\flat$, $a_n:=x_0+x_1p+\dots+x_np^n$ ($0\leq x_i \leq p-1$). 
We easily see that
\begin{align*}
\# \{y =1,2,\dots,a_n \mid \op y=k\}=x_k+\sum_{i=k+1}^n x_i p^{i-k-1}(p-1) \quad (0\leq k\leq n).
\end{align*}
Then we can write
\begin{align*}
f(a_n+1)^\flat=(f(1)g(1)g(2)\dotsm g(a_n))^\flat = f(1)^\flat \alpha_0^{x_0}\alpha_1^{x_1}\dotsm\alpha_n^{x_n}
\end{align*}
with $\alpha_k=c_k \prod_{i=0}^{k-1}c_i^{p^{k-1-i}(p-1)}$.
Since $\lim_{n\ra\infty}f(a_n+1)$ converges, so do $\lim_{n\ra\infty}f(a_n+1)^\flat$ and $\prod_{k=0}^\infty \alpha_k^{x_k}$. 
Moreover we can write
\begin{align*}
f(z)\equiv f(1)\prod_{k=0}^\infty \alpha_k^{x_k}.
\end{align*}
Consider the case of $d=2$, $z=\frac{1}{2}$ of (\ref{fe}): $f(\frac{1}{2})f(1) \equiv f(1)$.
Therefore, noting that $-\frac{1}{2}=\sum_{k=0}^\infty \frac{p-1}{2}p^k$, we obtain
\begin{align*}
1 \equiv f(\tfrac{1}{2}) \equiv f(1)\prod_{k=0}^\infty \alpha_k^{\frac{p-1}{2}}, \quad \text{that is,} \quad 
f(1) \equiv \prod_{k=0}^\infty \alpha_k^{-\frac{p-1}{2}}.
\end{align*}  
Then the assertion (ii) is also clear.

Next, assume (\ref{explicit}).
When $\op z=k$, we see that $\frac{f(z+1)}{f(z)}\equiv \frac{\alpha_k}{\alpha_{k-1}^{p-1}}$ (resp.\ $\alpha_0$) if $k>0$ (resp.\ $k=0$).
In particular, $g(z):=\frac{f(z+1)}{f(z)} \bmod \mu_\infty$ depends only on $\op z$.
When $z+z'=1$, the $p$-adic expansions $z-1=\sum_{k=0}^\infty x_k p^k$, $z'-1=\sum_{k=0}^\infty x_k' p^k$ satisfy $x_k+x_k'=p-1$ for any $k$.
Then we have
\begin{align*}
f(z)f(z')\equiv \prod_{k=0}^\infty \alpha_k^0= 1.
\end{align*}
Therefore the case $z=0$ of (\ref{fe}) holds true since $\left(\prod_{k=1}^{d-1}f(\frac{k}{d})\right)^2=\prod_{k=1}^{d-1}f(\frac{k}{d})f(1-\frac{k}{d}) \equiv 1$.
Then (\ref{fe}) for $z \in \mathbb N$ follows by mathematical induction on $z$ noting that 
\begin{align*}
\prod_{k=0}^{d-1}f(z+1+\tfrac{k}{d})&\equiv \prod_{k=0}^{d-1}f(z+\tfrac{k}{d})g(z+\tfrac{k}{d}), \\
f(dz+d) &\equiv  f(dz)g(dz)\dotsm g(dz+d-1), \\
\op (dz+k)&= \op (z+\tfrac{k}{d}).
\end{align*}
Since $\mathbb N$ is dense in $\mathbb Z_p$, we see that (\ref{fe}) holds for any $z \in \mathbb Z_p$.
\end{proof}

The following corollary provides a nice characterization of $\Gamma_p(z) \bmod \mu_\infty$ in terms of functional equations and one or two special values.

\begin{crl} \label{crloffe}
Assume a continuous function $f(z) \colon \mathbb Z_p \ra \mathbb C_p^\times$ satisfies 
\begin{align*}
\prod_{k=0}^{d-1} f(z+\tfrac{k}{d})\equiv f(dz) \mod \mu_\infty \quad (p\nmid d)
\end{align*}
and put 
\begin{align*}
c_n:=\left(\frac{f(p^n+1)}{f(p^n)}\right)^\flat.
\end{align*}
Then the following equivalences hold:
\begin{enumerate}
\item $c_0= c_1 =\cdots$ $\LR$ $f(z)\equiv c_0^{z-\frac{1}{2}} \bmod \mu_\infty$. 
\item $c_1=c_2=\cdots$ $\LR$ $f(z)\equiv c_0^{z-\frac{1}{2}}(c_1/c_0)^{z_1+\frac{1}{2}} \bmod \mu_\infty$.
\end{enumerate}
\end{crl}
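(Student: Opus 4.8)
The plan is to read both equivalences off the explicit description of $f$ provided by Proposition \ref{feef}(ii), after specializing the auxiliary constants $\alpha_k=c_k\prod_{i=0}^{k-1}c_i^{p^{k-1-i}(p-1)}$ to the situation in which the tail of the sequence $(c_n)_{n\ge 0}$ is eventually constant. I work throughout modulo $\mu_\infty$ and use freely that $z\mapsto z^\flat$ and $\alpha\mapsto\alpha^\beta=\exp_p(\beta\log_p\alpha)$ are continuous homomorphisms annihilating $\mu_\infty$ with $z\mapsto z^\flat$ idempotent, so that expressions like $c_0^{z-1/2}$ are unambiguous and $\flat$-values agreeing modulo $\mu_\infty$ coincide; I also use $\sum_{k\ge 0}p^k=(1-p)^{-1}$ in $\mathbb Z_p$. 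For the ``$\Leftarrow$'' directions it is enough to compute the constants $c_n=(g(p^n))^\flat$, $g(z):=f(z+1)/f(z)$, directly from the claimed closed form.

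For part (i), assume first $c_0=c_1=\cdots=:c$. Summing the geometric progression in the exponent gives $\alpha_k=c^{\,1+(p-1)(1+p+\cdots+p^{k-1})}=c^{p^k}$, so with $z-1=\sum_k x_kp^k$ the formula $f(z)\equiv\prod_{k\ge 0}\alpha_k^{x_k-(p-1)/2}$ of Proposition \ref{feef}(ii) yields $f(z)\equiv c^{\sum_k p^kx_k-\frac{p-1}{2}\sum_k p^k}=c^{(z-1)+1/2}=c_0^{z-1/2}$. Conversely, if $f(z)\equiv c_0^{z-1/2}$ then $g(z)\equiv c_0$ is constant, so $c_n=(g(p^n))^\flat=c_0$ for every $n$.

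For part (ii), assume first $c_1=c_2=\cdots$. The same computation with the tail starting at index $1$ gives $\alpha_0=c_0$ and $\alpha_k=(c_0^{p-1}c_1)^{p^{k-1}}$ for $k\ge 1$; writing $\beta:=c_0^{p-1}c_1$, Proposition \ref{feef}(ii) becomes $f(z)\equiv c_0^{\,x_0-(p-1)/2}\,\beta^{\,\sum_{k\ge 1}p^{k-1}(x_k-(p-1)/2)}$. The key bookkeeping step is to identify, uniformly in the two cases $z_0<p$ and $z_0=p$, the digit $x_0=z_0-1$ and the tail sum $\sum_{k\ge 1}p^{k-1}x_k=\frac{(z-1)-x_0}{p}=\frac{z-z_0}{p}=z_1$; combined with $\sum_{k\ge 1}p^{k-1}=(1-p)^{-1}$ this gives $f(z)\equiv c_0^{\,z_0-1-(p-1)/2}\,\beta^{\,z_1+1/2}$, which, on comparing the exponents of $c_0$ and $c_1$ and using $z=z_0+pz_1$, equals $c_0^{z-1/2}(c_1/c_0)^{z_1+1/2}$. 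Conversely, assuming this closed form, I evaluate $g(p^n)=f(p^n+1)/f(p^n)$ for $n\ge 1$ directly from $(p^n)_0=p$, $(p^n)_1=p^{n-1}-1$, $(p^n+1)_0=1$, $(p^n+1)_1=p^{n-1}$, which gives $g(p^n)\equiv c_0\cdot(c_1/c_0)=c_1$ and hence $c_n=(g(p^n))^\flat=c_1$ for all $n\ge 1$.

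The only genuinely delicate point is the digit identity $x_0=z_0-1$, $z_1=(z-z_0)/p$, which must be checked separately in the regime $p\mid z$, where the convention $z_0=p$ forces $x_0=p-1$; once that one-line verification is done, everything else is the geometric-series bookkeeping above together with Proposition \ref{feef}. I therefore do not anticipate any real obstacle beyond keeping the half-integer exponents and the $\flat$-normalizations straight.
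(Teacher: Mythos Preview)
Your proof is correct and follows essentially the same route as the paper's: both specialize the explicit formula $f(z)\equiv\prod_{k\ge 0}\alpha_k^{x_k-(p-1)/2}$ of Proposition~\ref{feef}(ii), compute $\alpha_k$ under the constancy assumption on the $c_n$, and collapse the geometric series in the exponent. Your packaging $\alpha_k=(c_0^{p-1}c_1)^{p^{k-1}}$ is algebraically identical to the paper's $\alpha_k=c_0^{p^k}(c_1/c_0)^{p^{k-1}}$, and you supply slightly more detail on the converse implications (which the paper dismisses as ``trivial by definition'') by computing $g(p^n)$ directly from the closed form; otherwise the arguments coincide.
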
 

\begin{proof} 
We suppress $\bmod \mu_\infty$. 
For (i), assume that $c_0= c_1 = \cdots$.
Then 
\begin{align*}
\alpha_k:= c_k \prod_{i=0}^{k-1}c_i^{p^{k-1-i}(p-1)} =c_0^{p^k}.
\end{align*}
Hence we have by Proposition \ref{feef}
\begin{align*}
f\left(1+\sum_{k=0}^\infty x_kp^k\right) \equiv \prod_{k=0}^\infty \alpha_k^{x_k-\frac{p-1}{2}} =c_0^{\sum_{k=0}^\infty x_kp^k-\frac{p-1}{2}p^k}= c_0^{z-1+\frac{1}{2}} =c_0^{z-\frac{1}{2}}.
\end{align*}
The opposite direction is trivial by definition $c_n:=(\frac{f(p^n+1)}{f(p^n)})^\flat$.
For (ii), the assumption $c_1= c_2 = \cdots$ implies $\alpha_0=c_0$, $\alpha_k =c_0^{p^k}(c_1/c_0)^{p^{k-1}}$ ($k\geq 1$).
In this case we have
\begin{align*}
f\left(1+\sum_{k=0}^\infty x_kp^k\right) \equiv c_0^{\sum_{k=0}^\infty x_kp^k-\frac{p-1}{2}p^k}(c_1/c_0)^{\sum_{k=1}^\infty x_kp^{k-1}-\frac{p-1}{2}p^{k-1}}
=c_0^{z-\frac{1}{2}}(c_1/c_0)^{z_1+\frac{1}{2}}
\end{align*}
since $\sum_{k=1}^\infty x_kp^{k-1}=\frac{z-1-x_0}{p}=z_1$.
\end{proof}

\subsection{Alternative proof of a part of Coleman's formula}

We fix $\tau\in \wg$ with $\deg \tau=1$ and put 
\begin{align} 
G_1(z)&:=\left(p^{\frac{1}{2}-\tau^{-1}(z)}\frac{P(z)}{\Phi_{\tau}(P(\tau^{-1}(z)))}\right)^\flat && (z \in \mathbb Z_{(p)} \cap (0,1)), \label{Gz} \\ 
G_2(z)&:=\left(\frac{p^{(\tau^{-1}(z)-z)\op z}P(z)}{\Phi_\tau (P(\tau^{-1}(z)))}\right)^\flat && (z \in (\mathbb Q-\mathbb Z_{(p)}) \cap (0,1)). \notag 
\end{align}
Here we added $(\ )^\flat$ to the right-hand sides of Coleman's formulas (Theorem \ref{Cf}), in order to resolve a root of unity ambiguity, only superficially.
Note that $G_2$ corresponds to Theorem \ref{Cf}-(ii) replaced $z$ with $\tau^{-1}(z)$.

By Theorem \ref{Cf}-(i), we see that $G_1$ is continuous for the $p$-adic topology.
$G_2$ is not $p$-adically continuous in the usual sense, on the whole of $(\mathbb Q-\mathbb Z_{(p)}) \cap (0,1)$ (for details, see Remark \ref{notcnt}).
Theorem \ref{Cf}-(i) only implies the following ``continuity'':
\begin{align} 
&\text{$G_1(z)$ is continuous for the relative topology} \notag \\
&\text{induced by $z \in (\mathbb Q-\mathbb Z_{(p)}) \cap (0,1) \hookrightarrow \mathbb Q_p \times\mathbb Q_p$, $z \mt (z,\tau^{-1}(z))$.} \label{cntforG2}
\end{align}

In Corollary \ref{maincrl}, oppositely, we show that the $p$-adic continuity of $G_1,G_2$ implies a ``large part'' 
\begin{align*}
G_1(z) \equiv  a^{z-\frac{1}{2}}  b^{z_1+\frac{1}{2}} \Gamma_p(z) \mod \mu_\infty \quad (a,b \in \mathbb C_p^\times)
\end{align*}
of Theorem \ref{Cf}-(i):
\begin{align*}
G_1(z)\equiv \Gamma_p(z) \mod \mu_\infty.
\end{align*}
Besides we shall show the continuity of $G_1(z)$ in \S \ref{secpc}, independently of Theorem \ref{Cf}.

Hereinafter in this section, we forget Theorem \ref{Cf}.
We assume the following Assumption instead.
\begin{asm} \label{asm}
$G_1(z)$ is $p$-adically continuous and $G_2(z)$ is continuous in the sense of {\rm(\ref{cntforG2})}.
In particular, we regard $G_1$ as a $p$-adic continuous function:
\begin{align*} 
G_1(z) \colon \mathbb Z_p \ra \mathbb C_p.
\end{align*}
\end{asm}

First we derive ``multiplication formula'':
\begin{align} \label{mf4G}
\prod_{k=0}^{d-1} G_1(z+\tfrac{k}{d}) \equiv d^{1-dz+(dz)_1} G_1(dz) \mod \mu_\infty \quad (p \nmid d \in \mathbb N)
\end{align} 
independently of Theorem \ref{Cf}.

\begin{proof}[Proof of {\rm(\ref{mf4G})}]  
We suppress $\bmod \mu_\infty$. 
Let $z\in \mathbb Z_{(p)} \cap (0,\frac{1}{d})$.
By Definition \ref{dfn}-(viii) and (\ref{Gz}) we can write
\begin{align*}
\frac{\prod_{k=0}^{d-1}G_1(z+\frac{k}{d})}{G_1(dz)}
&\equiv\frac{\prod_{k=0}^{d-1}\Gamma_\infty(z+\frac{k}{d})}{\Gamma_\infty(dz)}
\Phi_\tau\left(\frac{\Gamma_\infty(\tau^{-1}(dz))}{\prod_{k=0}^{d-1}\Gamma_\infty(\tau^{-1}(z+\frac{k}{d}))} \right)
\frac{\prod_{k=0}^{d-1}p^{\frac{1}{2}-\tau^{-1}(z+\frac{k}{d})}}{p^{\frac{1}{2}-\tau^{-1}(dz)}}  \\
&\quad \times \text{``products of classical or $p$-adic periods''},
\end{align*}
where the ``products of classical or $p$-adic periods'' become trivial by (\ref{mffb1}), as we saw in the proof of Proposition \ref{toy}.
Besides we see that 
\begin{align*}
\{\tau^{-1}(z+\tfrac{k}{d})\mid k=0,\dotsm,d-1\}=\{\tfrac{\tau^{-1}(dz)}{d}+\tfrac{k}{d} \mid k=0,\dotsm,d-1\}.
\end{align*}
To see this, it suffices to show that
$\{\tau^{-1}(\zeta_N^a\zeta_d^k) \mid k=0,\dots,d-1\}$ and $\{\tau^{-1}(\zeta_N^{da})^{\frac{1}{d}}\zeta_d^k \mid k=0,\dots,d-1\}$ coincide with each other.
We easily see that both of them are the inverse image of $\tau^{-1}(\zeta_N^{da})$ under the $d$th power map $\mu_\infty \ra \mu_\infty$, $x\mt x^d$.
Hence we obtain 
\begin{align*}
\frac{\prod_{k=0}^{d-1}G_1(z+\frac{k}{d})}{G_1(dz)}
&\equiv\frac{\prod_{k=0}^{d-1}\Gamma_\infty(z+\frac{k}{d})}{\Gamma_\infty(dz)} \cdot 
\Phi_\tau\left(\frac{\Gamma_\infty(\tau^{-1}(dz))}{\prod_{k=0}^{d-1}\Gamma_\infty(\frac{\tau^{-1}(dz)}{d}+\tfrac{k}{d})} \right)
\cdot \frac{\prod_{k=0}^{d-1}p^{\frac{1}{2}-(\tfrac{\tau^{-1}(dz)}{d}+\tfrac{k}{d})}}{p^{\frac{1}{2}-\tau^{-1}(dz)}} \\
&= d^{\frac{1}{2}-dz} \cdot \Phi_\tau( d^{\tau^{-1}(dz)-\frac{1}{2}}) \cdot 1 \equiv d^{\frac{1}{2}-dz} \cdot d^{\tau^{-1}(dz)-\frac{1}{2}}
\end{align*}
by (\ref{gmf}), (\ref{mffb1}).
For the last ``$\equiv$'', we note that $\Phi_\tau$ acts on $\overline{\mathbb Q_p} \ni d^{\tau^{-1}(dz)-\frac{1}{2}}$ as $\tau$.
By Remark \ref{rmk4dfn}-(ii), we have $\tau^{-1}(dz)=(dz)_1+1$.
Then the assertion is clear.
\end{proof}

Furthermore we can show that $c_n=\left(\frac{f(p^n+1)}{f(p^n)}\right)^\flat$ for $f(z):=\frac{G_1(z)}{\Gamma_p(z)}$ is constant, at least for $n\geq 1$.

\begin{thm} \label{mthm}
We assume {\rm Assumption \ref{asm}} and put $f(z):=\frac{G_1(z)}{\Gamma_p(z)}$.
\begin{enumerate}
\item The following functional equations hold.
\begin{align*}
\prod_{k=0}^{d-1} f(z+\tfrac{k}{d})\equiv f(dz)  \mod \mu_\infty \quad (p\nmid d).
\end{align*}
\item We have $c_1= c_2 = \cdots$ for $c_n:=\left(\tfrac{f(p^n+1)}{f(p^n)}\right)^\flat$.
\end{enumerate}
\end{thm} 

\begin{proof}
We suppress $\bmod \mu_\infty$. 
(i) follows from (\ref{dmult}), (\ref{mf4G}).
For (ii), we need for $z \in p\mathbb Z_p$
\begin{align*}
\frac{G_1(pz)G_1(z+1)}{G_1(pz+1)G_1(z)} \equiv \frac{\Gamma_p(pz)\Gamma_p(z+1)}{\Gamma_p(pz+1)\Gamma_p(z)}.
\end{align*}
Since the right-hand side is equal to $\begin{cases}1 &(p\mid z) \\ z &(p\nmid z) \end{cases}$ by (\ref{cofgp}), it suffices to show that
\begin{align*}
\frac{G_1(pz)G_1(z+1)}{G_1(pz+1)G_1(z)} \equiv 1 \quad (z \in p\mathbb Z_p).
\end{align*}
Note that we can not use the definition (\ref{Gz}) directly since $z,z+1,pz,pz+1$ are not contained in $(0,1)$ simultaneously.
Therefore a little complicated argument is needed as follows.
Let $z \in \mathbb Z_{(p)} \cap(0,\frac{1}{p})$. 
By Remark \ref{rmk4dfn}-(ii), we have
\begin{align*}
\tau(z)=\langle p z \rangle=pz, \text{ hence } \tau^{-1}(pz)=z.
\end{align*}
We can write
\begin{align*}
H_1(z)&:=\frac{G_1(z)G_2(z+\frac{1}{p})\dotsm G_2(z+\frac{p-1}{p})}{G_1(pz)} \\
&\equiv p^{z+(z+\frac{1}{p})+\dots+(z+\frac{p-1}{p})-\tau^{-1}(z)-\tau^{-1}(z+\frac{1}{p})-\dots-\tau^{-1}(z+\frac{p-1}{p})} \frac{P(z)P(z+\frac{1}{p})\dotsm P(z+\frac{p-1}{p})}{P(pz)}\\
& \quad \times \Phi_{\tau}\left(\frac{P(z)}{P(\tau^{-1}(z))P(\tau^{-1}(z+\frac{1}{p}))\dotsm P(\tau^{-1}(z+\frac{p-1}{p}))}\right).
\end{align*}
Here we note that $\mathrm{ord}_p(z+\frac{k}{p})=-1$ for $k=1,\dots,p-1$.
We have 
\begin{align} \label{tinv}
\{\tau^{-1}(z+\tfrac{k}{p}) \mid k=0,\dots,p-1\}=\{\tfrac{z+k}{p} \mid k=0,\dots,p-1\}
\end{align}
since both of $\{\tau^{-1}(\zeta_N^a \zeta_p^k) \mid k=0,\dots,p-1\}$, $\{\zeta_{pN}^{a+Nk} \mid  k=0,\dots,p-1\}$ are the  set of the $p$th roots of $\zeta_N^a$ when $z=\frac{a}{N}$.
Therefore the $p$-power parts of $H_1$ become 
\begin{align*}
p^{z+(z+\frac{1}{p})+\dots+(z+\frac{p-1}{p})-\frac{z}{p}-\frac{z+1}{p}-\dots-\frac{z+p-1}{p}}=p^{(p-1)z}.
\end{align*}
Moreover the ``period parts'' of $H_1$ become trivial by (\ref{mffb1}), (\ref{tinv}).
Namely we can write 
\begin{align*}
H_1(z)\equiv  p^{(p-1)z}\frac{\Gamma_\infty(z)   \Gamma_\infty(z+\frac{1}{p})\dotsm \Gamma_\infty(z+\frac{p-1}{p})}{\Gamma_\infty(pz)}
\Phi_{\tau}\left(\frac{\Gamma_\infty(z)}{\Gamma_\infty(\frac{z}{p})\Gamma_\infty(\frac{z+1}{p})\dotsm \Gamma_\infty(\frac{z+p-1}{p})}\right).
\end{align*}
By using the original Multiplication formula (\ref{gmf}) for $\Gamma_\infty$, we obtain
\begin{align*}
H_1(z)\equiv  p^{(p-1)z} p^{\frac{1}{2}-pz} p^{z-\frac{1}{2}}=1.
\end{align*}

Next, let $z=\frac{a}{N} \in \mathbb Z_{(p)}\cap (-\frac{1}{p},0)$.
Then we have
\begin{itemize}
\item $\tau(z+1)=pz+1$. Hence $\tau^{-1}(pz+1)=z+1$.
\item $\{\tau^{-1}(\zeta_N^a \zeta_p^k) \mid k=1,\dots,p\}=\{\zeta \mid \zeta^p=\zeta_N^a\}=\{\zeta_{pN}^{a+Nk} \mid  k=1,\dots,p\}$.
Hence $\{\tau^{-1}(z+\frac{k}{p})\mid k=1,\dots,p\}=\{\tfrac{z+k}{p}\mid k=1,\dots,p\}$.
\end{itemize}
Then we can prove similarly that 
\begin{align*}
&H_2(z)\\
&:=\frac{G_2(z+\frac{1}{p})\dotsm G_2(z+\frac{p-1}{p})G_1(z+1)}{G_1(pz+1)} \\
&\equiv p^{(z+\frac{1}{p})+\dots+(z+\frac{p-1}{p})+(z+1)-\tau^{-1}(z+\frac{1}{p})-\dots-\tau^{-1}(z+\frac{p-1}{p})-\tau^{-1}(z+1)} \frac{P(z+\frac{1}{p})\dotsm P(z+\frac{p-1}{p})P(z+1)}{P(pz+1)} \\
& \quad \times \Phi_{\tau}\left(\frac{P(z+1)}{P(\tau^{-1}(z+\frac{1}{p}))\dotsm P(\tau^{-1}(z+\frac{p-1}{p}))P(\tau^{-1}(z+1))}\right) \\
&\equiv p^{(z+\frac{1}{p})+\dots+(z+\frac{p-1}{p})+(z+1)-\frac{z+1}{p}-\dots -\frac{z+p-1}{p}-\frac{z+p}{p}} p^{\frac{1}{2}-(pz+1)} p^{z+1-\frac{1}{2}} = 1.
\end{align*}

Here $H_i(z)\equiv 1 \bmod \mu_\infty$ implies $H_i(z)= 1$ ($i=1,2$) since 
we have $x^\flat=\exp_p(\log_p x)=\exp_p(0)=1$ for $x\in \mu_\infty$.
($G_1(z),G_2(z)$ are in the image under $(\ )^\flat$ by definition, so are $H_i(z)$.)
In particular, we have
\begin{align*}
\frac{G_1(pz)}{G_1(z)}&= G_2(z+\tfrac{1}{p})\dotsm G_2(z+\tfrac{p-1}{p}) \quad (z \in \mathbb Z_{(p)} \cap(0,\tfrac{1}{p})), \\
\frac{G_1(pz+1)}{G_1(z+1)}&= G_2(z+\tfrac{1}{p})\dotsm G_2(z+\tfrac{p-1}{p}) \quad (z \in \mathbb Z_{(p)}\cap (-\tfrac{1}{p},0)).
\end{align*}

Let $z\in p\mathbb Z_{(p)}$.
Then there exist $z_n^+\in p\mathbb Z_{(p)} \cap (0,\frac{1}{p})$, $z_n^-\in p\mathbb Z_{(p)} \cap (-\frac{1}{p},0)$ which converge to $z$ when $n\ra \infty$ respectively.
Then we can write
\begin{align*}
\frac{G_1(pz)}{G_1(z)}
&=\lim_{n\ra \infty} \frac{G_1(pz_n^+)}{G_1(z_n^+)}
= \lim_{n\ra \infty} G_2(z_n^+ +\tfrac{1}{p})\dotsm G_2(z_n^+ +\tfrac{p-1}{p}), \\
\frac{G_1(pz+1)}{G_1(z+1)}&=\lim_{n\ra \infty} \frac{G_1(pz_n^- +1)}{G_1(z_n^- +1)}
= \lim_{n\ra \infty} G_2(z_n^- +\tfrac{1}{p})\dotsm G_2(z_n^- +\tfrac{p-1}{p}).
\end{align*}
Recall that $G_2(z)$ is continuous in the sense of (\ref{cntforG2}).
Clearly we have for $k=1,\dots,p-1$
\begin{align*}
z_n^{\pm}+\tfrac{k}{p} \ra z+ \tfrac{k}{p} \quad (n\ra \infty).
\end{align*}
Additionally we see that 
\begin{align*}
\tau^{-1}(z_n^{\pm}+\tfrac{k}{p})=
\tfrac{z_n^{\pm}}{p}+\tau^{-1}(\tfrac{k}{p}) \ra \tfrac{z}{p}+\tau^{-1}(\tfrac{k}{p})
\quad (n\ra \infty)
\end{align*}
by noting that $\tau^{-1}(z+z')\equiv \tau^{-1}(z)+\tau^{-1}(z') \bmod \mathbb Z$ ($\forall z,z'$), $\tau^{-1}(z) \equiv \tfrac{z}{p} \bmod \mathbb Z$ if $p\mid z$, 
$\tfrac{z_n^{\pm}}{p} \in (-\frac{1}{p},\frac{1}{p})$, $\tau^{-1}(\tfrac{k}{p}) \in [\frac{1}{p},\frac{p-1}{p}]$.
It follows that 
\begin{align*}
\lim_{n\ra \infty} G_2(z_n^+ +\tfrac{k}{p})= \lim_{n\ra \infty} G_2(z_n^- +\tfrac{k}{p}).
\end{align*}
Then the assertion is clear.
\end{proof}

By Corollary \ref{crloffe}, we obtain the following.

\begin{crl} \label{maincrl}
Assume {\rm Assumption \ref{asm}}.
Then there exist constants $a,b$ satisfying
\begin{align*}
G_1(z) \equiv  a^{z-\frac{1}{2}}  b^{z_1+\frac{1}{2}} \Gamma_p(z) \mod \mu_\infty.
\end{align*}
\end{crl}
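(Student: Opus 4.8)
The plan is to apply Corollary \ref{crloffe}-(ii) to the function $f(z):=\frac{G(z)}{\Gamma_p(z)}$. By Theorem \ref{mthm}, this $f$ is a continuous function $\mathbb Z_p \ra \mathbb C_p^\times$ satisfying the multiplication-type functional equations $\prod_{k=0}^{d-1} f(z+\frac{k}{d}) \equiv f(dz) \bmod \mu_\infty$ for all $d$ with $p\nmid d$, and moreover the constants $c_n:=\left(\frac{f(p^n+1)}{f(p^n)}\right)^\flat$ satisfy $c_1=c_2=\cdots$. These are precisely the hypotheses of Corollary \ref{crloffe}-(ii).

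The verification that $f$ meets the hypotheses is already done: continuity of $f$ holds because $G(z)$ is continuous (to be established in \S \ref{secpc}) and $\Gamma_p(z)$ is continuous and never vanishes on $\mathbb Z_p$; the functional equation for $f$ is the quotient of (\ref{mf4G}) by (\ref{dmult}); and the relation $c_1=c_2=\cdots$ is the content of Theorem \ref{mthm}. So I would simply invoke Corollary \ref{crloffe}-(ii) to conclude
\begin{align*}
f(z) \equiv c_0^{z-\frac{1}{2}}(c_1/c_0)^{z_1+\frac{1}{2}} \mod \mu_\infty,
\end{align*}
and then set $a:=c_0$ and $b:=c_1/c_0$, both elements of $\mathbb C_p^\times$. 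Unwinding $f(z)=\frac{G(z)}{\Gamma_p(z)}$ gives exactly $G(z) \equiv a^{z-\frac{1}{2}} b^{z_1+\frac{1}{2}} \Gamma_p(z) \bmod \mu_\infty$, which is the assertion.

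There is essentially no obstacle here: the corollary is a two-line deduction from the already-proved Theorem \ref{mthm} and Corollary \ref{crloffe}. The only minor point to keep in mind is that the exponentials $c_0^{z-\frac{1}{2}}$ and $(c_1/c_0)^{z_1+\frac{1}{2}}$ are well-defined via $\exp_p$ and $\log_p$ as in Definition \ref{dfn}-(i), so that $a^{z-\frac{1}{2}}b^{z_1+\frac{1}{2}}$ is an honest continuous function of $z\in\mathbb Z_p$; this is automatic from Remark \ref{rmk4dfn}-(i). All the real work — deriving the multiplication formula (\ref{mf4G}) and proving the key relation $c_1=c_2=\cdots$ via the auxiliary identities $H_1(z)=H_2(z)=1$ — has been carried out in the proof of Theorem \ref{mthm}; the present corollary merely packages those facts through the characterization in Corollary \ref{crloffe}.

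\begin{proof}
This is immediate from Theorem \ref{mthm} and Corollary \ref{crloffe}-(ii): the function $f(z):=\frac{G(z)}{\Gamma_p(z)}$ is continuous $\mathbb Z_p \ra \mathbb C_p^\times$, satisfies $\prod_{k=0}^{d-1} f(z+\frac{k}{d})\equiv f(dz) \bmod \mu_\infty$ for $p\nmid d$, and has $c_1=c_2=\cdots$. Hence $f(z)\equiv c_0^{z-\frac{1}{2}}(c_1/c_0)^{z_1+\frac{1}{2}} \bmod \mu_\infty$, and putting $a:=c_0$, $b:=c_1/c_0$ yields $G(z)\equiv a^{z-\frac{1}{2}}b^{z_1+\frac{1}{2}}\Gamma_p(z) \bmod \mu_\infty$.
\end{proof}
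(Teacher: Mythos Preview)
Your proof is correct and matches the paper's own approach exactly: the paper simply states ``By Corollary \ref{crloffe}, we obtain the following'' immediately after Theorem \ref{mthm}, which is precisely your argument of applying Corollary \ref{crloffe}-(ii) to $f(z)=G(z)/\Gamma_p(z)$ with $a=c_0$, $b=c_1/c_0$.
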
  

\begin{rmk} \label{onecurve}
In addition to the above results, by computing the absolute Frobenius on only one Fermat curve, 
we obtain Coleman's formula $G_1(z) \equiv \Gamma_p(z) \bmod \mu_\infty$. 
For example, when $p=3$, we obtain it for $z=\frac{1}{5},\frac{2}{5}$ by the computation on $F_5$.
It follows that $a^{\frac{-3}{10}}b^{\frac{-1}{10}}\equiv a^{\frac{-1}{10}}b^{\frac{3}{10}} \equiv 1$, hence $a\equiv b \equiv 1$. 
\end{rmk}

\begin{rmk} \label{notcnt}
We used the assumption $p \mid z$ only in the last paragraph of the proof for {\rm Theorem \ref{mthm}}
because $G_2$ is not $p$-adically continuous on the whole of $(\mathbb Q-\mathbb Z_{(p)}) \cap (0,1)$.
For example, we put
\begin{align*}
z_n:=\frac{1}{p^2}+\frac{p^{n+1}}{p^{n+2}+(1-p)^n} \in (\mathbb Q-\mathbb Z_{(p)}) \cap (0,1) \quad (n \in \mathbb N)
\end{align*}
and take $\tau \in W_p$ with $\deg \tau =1$ so that
\begin{align*}
\tau(\zeta_{p^2})=\zeta_{p^2}^{-1}.
\end{align*}
In particular we see that 
\begin{center}
$z_n \ra \frac{1}{p^2}$ for the $p$-adic topology. 
\end{center}
On the other hand we see that
\begin{align*}
&\tau^{-1}(z_n) \equiv \tau^{-1}(\tfrac{1}{p^2})+\tau^{-1}(\tfrac{p^{n+1}}{p^{n+2}+(1-p)^n} )=\tfrac{p^2-1}{p^2}+\tfrac{p^{n}}{p^{n+2}+(1-p)^n}=1-\tfrac{(1-p)^n}{p^2(p^{n+2}+(1-p)^n)} 
\bmod \mathbb Z, \\
&1-\tfrac{(1-p)^n}{p^2(p^{n+2}+(1-p)^n)} \in 
\begin{cases}
(1,2) & \text{if $n$ is odd}, \\
(0,1) & \text{if $n$ is even}.
\end{cases}
\end{align*}
Hence we have
\begin{align*}
\tau^{-1}(z_n) =
\begin{cases}
-\tfrac{(1-p)^n}{p^2(p^{n+2}+(1-p)^n)} \ra -\tfrac{1}{p^2} & \text{if $n=2k+1$, $k\ra \infty$}, \\
1-\tfrac{(1-p)^n}{p^2(p^{n+2}+(1-p)^n)} \ra 1-\tfrac{1}{p^2} & \text{if $n=2k$, $k\ra \infty$}.
\end{cases}
\end{align*}
Then, by {\rm Theorem \ref{Cf}-(ii)}, we see that $G_2(z_n) = (\Gamma_p(z_n)/\Gamma_p(\tau^{-1}(z_n)))^\flat $ does not converge $p$-adically although $z_n$ does.
\end{rmk}  

\section{On the $p$-adic continuity} \label{secpc}

In the previous section, we showed that the $p$-adic continuity of the right-hand sides of Theorem \ref{Cf}-(i), (ii) implies a large part of Theorem \ref{Cf}-(i) itself.
In this section, we see that it is relatively easy to show such $p$-adic continuity properties, without explicit computation.
For simplicity, we consider only the case $z \in \mathbb Z_p$.
Assume that $p\nmid N$.
 
\begin{lmm}[{\cite[\S VI]{Co1}}] 
Let $1\leq r,s<N$ with $r+s \neq N$.
We consider the formal expansion of the differential form $\eta_{r,s}=x^ry^{s-N} \frac{dx}{x}$ on $F_N\colon x^N+y^N=1$ at $(x,y)=(0,1)$:
\begin{align*}
\eta_{r,s}&=\sum_{n=0}^\infty b_{r,s}(n) x^n \frac{dx}{x}, \\
b_{r,s}(n)&:=
\begin{cases}
(-1)^{\frac{n-r}{N}}\dbinom{\frac{s}{N}-1}{\frac{n-r}{N}} & (n\equiv r \mod N), \\
0 & (n\not\equiv r \mod N).
\end{cases}
\end{align*}
Let $\Phi$ be the absolute Frobenius on $H_{\mathrm dR}^1(F_N,\mathbb Q_p)$. Then there exists $\alpha_{r',s'} \in \mathbb Q_p$ satisfying  
\begin{align*}
\Phi(\eta_{r,s})=\alpha_{r',s'} \eta_{r',s'} \ \text{for} \ r',s' \ \text{with} \ 1\leq r',s'<N,\ pr\equiv r' \bmod N,\ ps\equiv s' \bmod N.
\end{align*}
Then we have
\begin{align} \label{limit}
\alpha_{r',s'}
=\lim_{\substack{\mathbb N \ni n \mt 0 \\ n\equiv r \bmod N}}\frac{pb_{r,s}(n)}{b_{r',s'}(pn)}
=\lim_{\mathbb N \ni k \ra -\frac{r}{N}} (-1)^{(p-1)k+\frac{pr-r'}{N}}\frac{p\dbinom{\frac{s}{N}-1}{k}}{\dbinom{\frac{s'}{N}-1}{pk+\frac{pr-r'}{N}}}.
\end{align}
\end{lmm}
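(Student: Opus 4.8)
\emph{Setup.} Since $p\nmid N$, $F_N$ has good reduction at $p$, and $\eta_{r,s}=x^ry^{s-N}\frac{dx}{x}$ is a regular $1$-form on the smooth affine curve $U_N:=F_N\cap\{xy\neq 0\}$. I would compute the absolute Frobenius $\Phi$ through its realization on the Monsky--Washnitzer cohomology of $U_N$: fix a lift $\phi$ of the $p$-power Frobenius on the weak completion $A^\dagger$ of $\mathbb Q_p[x^{\pm1},y^{\pm1}]/(x^N+y^N-1)$. Granting the eigenvector statement of the Lemma (which comes from $\Phi$ permuting the $1$-dimensional $\mu_N\times\mu_N$-bi-eigenspaces by the $p$-power twist on their characters), one obtains the overconvergent identity
\[
\phi^*\eta_{r,s}=\alpha_{r',s'}\,\eta_{r',s'}+df,\qquad f\in A^\dagger .
\]
Restrict it to the residue disk of $P_0=(0,1)$, where $x$ is a coordinate: there $\eta_{r,s}=\sum_n b_{r,s}(n)x^n\frac{dx}{x}$ and $\eta_{r',s'}=\sum_n b_{r',s'}(n)x^n\frac{dx}{x}$ are the stated expansions, $f=\sum_n c(n)x^n$ is analytic with bounded coefficients (overconvergence), and $\phi$ acts as a degree-$p$ self-map of the disk with $\phi^*(x)\equiv x^p\bmod p$, $\phi^*(x)|_{x=0}=0$. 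Comparing, for $n\in\mathbb N$ with $n\equiv r\bmod N$, the coefficient of $x^{pn}\frac{dx}{x}$ on both sides writes $\alpha_{r',s'}\,b_{r',s'}(pn)+pn\,c(pn)$ as an explicit convergent combination of the $b_{r,s}(m)$, $m\le pn$, with leading part $p\,b_{r,s}(n)$ (coming from $\phi^*(x)\equiv x^p$).

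\emph{Passing to the $p$-adic limit.} Now let $n\to 0$ in $\mathbb Z_p$ along $n\in\mathbb N$, $n\equiv r\bmod N$. By a Dwork-type congruence argument — the coefficients $b_{r,s}(n)=(-1)^{(n-r)/N}\binom{s/N-1}{(n-r)/N}$ being binomial coefficients in a $\mathbb Z_p$-parameter, hence subject to Dwork's congruences — the non-leading contributions and the term $pn\,c(pn)$ become negligible relative to $b_{r',s'}(pn)$, while $|b_{r',s'}(pn)|_p$ stays bounded away from $0$; hence
\[
\alpha_{r',s'}=\lim_{\substack{\mathbb N\ni n\to 0\\ n\equiv r\bmod N}}\frac{p\,b_{r,s}(n)}{b_{r',s'}(pn)},
\]
the first equality in (\ref{limit}). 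The second follows by inserting the binomial expressions, setting $k=(n-r)/N\to -r/N$ in $\mathbb Z_p$, using $pn-r'=N\bigl(pk+\tfrac{pr-r'}{N}\bigr)$ with $\tfrac{pr-r'}{N}\in\mathbb Z$, and folding the signs into $(-1)^{(p-1)k+(pr-r')/N}$.

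\emph{Main obstacle.} All the real work is in the limit step: proving that the naive ratio of local expansion coefficients converges, and to exactly $\alpha_{r',s'}$ with no residual error. This is the Dwork-congruence analysis of the hypergeometric-type series $\sum_n b_{r,s}(n)x^n$ coming from the Gauss--Manin connection of the Fermat pencil; a clean organization is to establish those congruences directly for the series and only then identify the resulting $p$-adic number (a unit times a power of $p$) with the Frobenius eigenvalue via the cohomological identity and the $1$-dimensionality of the eigenspace. Alternatively, one may invoke the classical description of $\alpha_{r',s'}$ as an explicit power of $p$ times a Jacobi sum for the order-$N$ characters attached to $(r,s)$, and recognize (\ref{limit}) as the standard $p$-adic limit formula for Jacobi sums, i.e.\ a form of the Gross--Koblitz formula that underlies the whole paper.
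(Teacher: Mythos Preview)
The paper does not prove this lemma; it is quoted from Coleman \cite[\S VI]{Co1} and used as a black box for the subsequent Proposition. There is therefore no proof in the paper to compare your attempt against.

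On its own merits, your outline is the correct strategy and is essentially Coleman's. The eigenvector statement is indeed forced by the $\mu_N\times\mu_N$-equivariance of Frobenius, and the second displayed equality is, as you say, pure substitution: with $k=(n-r)/N$ one has $(pn-r')/N=pk+(pr-r')/N$ and the signs combine as claimed. The only substantive content is the first equality, and you correctly isolate the obstacle: in the coefficient identity coming from $\phi^*\eta_{r,s}=\alpha_{r',s'}\eta_{r',s'}+df$, one must show that the error terms---those arising from $\phi^*(x)-x^p\in pA^\dagger$ and from $pn\cdot c(pn)$---vanish in the limit \emph{after} division by $b_{r',s'}(pn)$. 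That requires a lower bound on $|b_{r',s'}(pn)|_p$ along the sequence, which is precisely the Dwork-type input you name but do not supply. Your sketch is honest about this gap; Coleman fills it in \cite[\S VI]{Co1} by a direct estimate on the binomial coefficients, so that is the reference to consult if you want to complete the argument.
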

We note that $\alpha_{r',s'}$ depends only on $(\frac{r'}{N},\frac{s'}{N})$.
That is $\alpha_{r',s'}$ with $N=N_1$ is equal to $\alpha_{tr',ts'}$ with $N=tN_1$.

\begin{prp} \label{Friscont}
$\alpha_{r',s'}$ is $p$-adically continuous on $(\frac{r'}{N},\frac{s'}{N}) \in (\mathbb Z_{(p)} \cap (0,1))^2$.
\end{prp}

\begin{proof}
It suffices to show that $\alpha_{r'_1,s'_1}$ with $N=N_1$ is close to $\alpha_{r'_2,s'_2}$ with $N=N_2$ 
when $\frac{r'_1}{N_1}$ is close to $\frac{r'_2}{N_2}$ and $\frac{s'_1}{N_1}$ is close to $\frac{s'_2}{N_2}$.
We may assume $N:=N_1=N_2$ by considering $N=N_1N_2$. 
First we fix $r':=r'_1=r'_2$ and assume that $s_1'$ is close to $s_2'$.
Then we can take the same $k$ for the limit expressions (\ref{limit}) of $\alpha_{r',s'_1}$, $\alpha_{r',s'_2}$.
We easily see that if $p^l \mid (s_1'-s_2')$, then $p^{l-1} \mid (s_1-s_2)$. In fact, we can write $s_i'=ps_i-l_iN$ with $l_i=0,1,\dots,p-1$ since $0< s_i,s_i' < N$ for $i=1,2$.
If $p \mid (s_1'-s_2')$, then we have $p \mid (l_1-l_2)$, so $l_1=l_2$. Therefore we obtain $s_1-s_2=\frac{s_1'-s_2'}{p}$.
It follows that $s_1$ also is close to $s_2$.
Hence the continuity on $\frac{s'}{N}$ is clear since the numerator (resp.\ the denominator) of the expression (\ref{limit}) is a polynomial on $\frac{s}{N}$ (resp.\ $\frac{s'}{N}$).

For the variable $\frac{r'}{N}$, we replace $x$ with $y$.
In other words, replace the point $(x,y)=(0,1)$ for the expansion with $(1,0)$.
Then the continuity on $\frac{r'}{N}$ also follows from the same argument.
\end{proof}

\begin{crl} \label{Gisc}
$G_1(z)$ defined in {\rm (\ref{Gz})} is $p$-adically continuous on $z \in \mathbb Z_{(p)} \cap (0,1)$.
In particular, we may regard $G_1(z)$ as a continuous function on $\mathbb Z_p$.
\end{crl}

\begin{proof}
CM-types $\Xi_{r,s}$ of (\ref{xi}), corresponding to $\eta_{r,s}$, generate the $\mathbb Q$-vector space 
$\{\sum_{\sigma} c_\sigma \cdot \sigma \mid c_{\sigma}+c_{\rho \circ \sigma}$ is a constant$\}$.
More explicitly, we claim that  
\begin{align*}
\sum_{(b,N)=1} \left(\tfrac{1}{2}-\langle \tfrac{ab}{N}\rangle\right) \sigma_b
=\tfrac{1}{N}\sum_{1\leq s<N,\ a+s\neq N}\Xi_{a,s}-\tfrac{N-2}{2N}\sum_{(b,N)=1} \sigma_b,
\end{align*}
where $s$ runs over $1\leq s<N$ with $a+s\neq N$ in the first sum of the right-hand side. 
By the definition (\ref{xi}), $\sigma_b \in \Xi_{a,s}$ if and only if $\langle \frac{ab}{N}\rangle+\langle \frac{sb}{N}\rangle<1$.
Namely $\langle \frac{sb}{N}\rangle=\frac{1}{N},\frac{2}{N},\dots,1-\frac{1}{N}-\langle\frac{ab}{N}\rangle$.
The number of such $b$ is congruent  to $-1-ab \bmod N$. 
Hence we have
\begin{align*}
\tfrac{1}{N}\sum_{1\leq s<N,\ a+s\neq N}\Xi_{a,s}= \sum_{(b,N)=1} \langle \tfrac{-1-ab}{N}\rangle \sigma_b=\sum_{(b,N)=1} \left(1-\tfrac{1}{N} -\langle \tfrac{ab}{N}\rangle\right) \sigma_b.
\end{align*}
Here we note that $ab \not \equiv 0 \bmod N$ since $(b,N)=1$, $a\not \equiv 0 \bmod N$.
Then the above claim follows. By substituting this into Definition \ref{dfn}-(viii), we can write
\begin{align*}
&P(\tfrac{a}{N}) \equiv \frac{\Gamma_\infty(\frac{a}{N})(2\pi i)_p^{\frac{1}{2}-\frac{a}{N}}\prod_{1\leq s<N,\ a+s\neq N} \left((2\pi i)_p^{e_s}\int_{\gamma,p} \eta_{a,s}\right)^{\frac{1}{N}}}
{(2\pi i)^{\frac{1}{2}-\frac{a}{N}}\prod_{1\leq s<N,\ a+s\neq N} \left((2\pi i)^{e_s}\int_{\gamma} \eta_{a,s}\right)^{\frac{1}{N}}} \mod \mu_\infty, \\
&e_s:=
\begin{cases}
-1 & (a+s<N) \\
0 & (a+s>N)
\end{cases}
\end{align*}
since the part $\sum_{(b,N)=1} \sigma_b$ becomes trivial by Proposition \ref{tips}-(ii).
We can strengthen the congruence relation $\equiv$ of the formula (\ref{gamma}) into an equality $=$, 
by selecting a specific closed path $\gamma_0$ (e.g., $\gamma_0=N\gamma_N$ with $\gamma_N$ in \cite[Proposition 4.9]{Ot}).
Then we have
\begin{align*}
P(\tfrac{a}{N}) \equiv c \cdot (2\pi i)_p^{\frac{-1}{2}+\frac{1}{N}} \prod_{1\leq s<N,\ a+s\neq N} \left(\int_{\gamma_0,p} \eta_{a,s}\right)^{\frac{1}{N}} \mod \mu_\infty,
\end{align*}
where we put
\begin{align*}
c:=\frac{\Gamma(\frac{a}{N})}{(2\pi)^{\frac{1}{N}}}\left(\prod_{1\leq s < N,\ a+s\neq N} \frac{\Gamma(\frac{a+s}{N})}{\Gamma(\frac{a}{N})\Gamma(\frac{s}{N})}\right)^\frac{1}{N}.
\end{align*}
Since (\ref{gmf}) implies that
\begin{align*}
\prod_{1\leq s \leq N} \frac{\Gamma(\frac{a+s}{N})}{\Gamma(\frac{a}{N})\Gamma(\frac{s}{N})} 
=\frac{N^{-a}a!}{\Gamma(\frac{a}{N})^N},
\end{align*}
we obtain 
\begin{align*}
c=\frac{\Gamma(\frac{a}{N})}{(2\pi)^{\frac{1}{N}}} 
\left(\frac{\Gamma(\frac{a}{N})\Gamma(\frac{N-a}{N})}{\Gamma(1)} \frac{\Gamma(\frac{a}{N})\Gamma(\frac{N}{N})}{\Gamma(\frac{a+N}{N})}\frac{N^{-a}a!}{\Gamma(\frac{a}{N})^N}\right)^\frac{1}{N}
=\left(\frac{N^{1-a}(a-1)!}{2\sin (\frac{a}{N}\pi)}\right)^\frac{1}{N}.
\end{align*}
For the last equality we used (\ref{erf}) and the difference equation $\Gamma(z+1)=z\Gamma(z)$.
Take $\tau \in W_p$ with $\deg \tau =1$. Then we have
\begin{align*}
G_1(\tfrac{a'}{N}) &\equiv p^{\frac{1}{2}-\frac{a}{N}} \frac{P(\frac{a'}{N})}{\Phi_\tau(P(\frac{a}{N}))} 
\equiv \left(\frac{N^{a-a'}(a'-1)!}{ p^{a-N-1} (a-1)!}\prod_{1\leq s<N,\ a+s\neq N} \alpha_{a',s'}^{-1} \right)^\frac{1}{N}\mod \mu_\infty,
\end{align*}
by noting that $\Phi_\tau((2\pi i)_p)=p(2\pi i)_p$ and $\Phi_\tau(\sin(\tfrac{a}{N}\pi))=\tau(\sin(\tfrac{a}{N}\pi))=\pm \sin(\tfrac{a'}{N} \pi)$.
Here $a',s'$ denote integers satisfying $1\leq a',s' <N$, $pa \equiv a' \bmod N$, $ps\equiv s' \bmod N$ as above.
By Proposition \ref{Friscont}, $\alpha_{a',s'}$ are continuous for $a'$.
When $a$ is in a small open ball, as we saw in the proof of  Proposition \ref{Friscont}, we may write $a'=pa-M$ for a fixed $M$ ($M$ is $lN$ in the proof of  Proposition \ref{Friscont}).
Then the remaining part becomes
\begin{align*}
\frac{N^{a-a'}(a'-1)!}{ p^{a-N-1} (a-1)!}=\pm  \Gamma_p(a'+M+1) \frac{p^{N}N^{\frac{(1-p)a'+M}{p}}(a'+M)}{a' (a'+1)(a'+2)\cdots (a'+M)},
\end{align*}
which is also continuous as desired.
\end{proof}

\end{document}